\documentclass[11pt]{amsart}

\usepackage{amssymb,xspace}
\usepackage{graphicx}

\newcommand*{\Ab}{\ensuremath{\mathbb A}\xspace}
\newcommand*{\C}{\ensuremath{\mathbb C}\xspace}
\newcommand*{\E}{\ensuremath{\mathcal E}\xspace}

\newcommand*{\Ll}{\ensuremath{\mathcal L}\xspace}

\newcommand*{\m}{\ensuremath{\mathfrak m}\xspace}

\def\Oo{\ensuremath{\mathcal O}\xspace}
\newcommand*{\PP}{\ensuremath{\mathbb P}\xspace}
\newcommand*{\Pc}{\ensuremath{\mathcal P}\xspace}

\newcommand*{\Ss}{\ensuremath{\mathcal S}\xspace}
\newcommand*{\T}{\ensuremath{\mathcal T}\xspace}

\DeclareMathOperator{\Osc}{Osc}
\DeclareMathOperator{\pr}{pr}

\DeclareMathOperator{\Sym}{Sym}

\let\ph\varphi

\setcounter{MaxMatrixCols}{20}

\newcommand*{\lst}[3][1]{\ensuremath{#2_{#1}, \ldots, #2_{#3}}\xspace}

\newtheorem{prop}{Proposition}[section]

\newtheorem{lemma}[prop]{Lemma}
\newtheorem{cor}[prop]{Corollary}

\theoremstyle{definition}

\numberwithin{equation}{section}

\author{Serge Lvovski}
\address{National Research University Higher School of Economics,
  Moscow, Russia}
\email{lvovski@gmail.com}

\keywords{Osculating space, contact structure, projective duality}

\subjclass{14N99,53D10}

\thanks{The article was prepared within the framework of a subsidy
  granted to the HSE by the Government of the Russian Federation for
  the implementation of the Global Competitiveness Program.}

\title{Some remarks on osculating self-dual varieties}

\begin{document}

\begin{abstract}
Let us say that a curve $C\subset\PP^3$ is osculating self-dual if it is
projectively equivalent to the curve in the dual space $(\PP^3)^*$
whose points are osculating planes to~$C$. Similarly, we say that a
$k$-dimensional subvariety $X\subset\PP^{2k+1}$ is osculating
self-dual if its second osculating space at the general point is a
hyperplane and $X$ is projectively equivalent to the variety in
$(\PP^{2k+1})^*$ whose points are second osculating spaces to $X$.

In this note we show that for each $k\ge 1$ there exist many
osculating self-dual $k$-dimensional subvarieties in $\PP^{2k+1}$. 
\end{abstract}

\maketitle

\section{Introduction}
If $C\subset\PP^n$ is a projective curve (not lying in a hyperplane),
then its \emph{osculating dual} is the curve $C^\vee\subset(\PP^n)^*$
that is closure of the set of (points corresponding to) hyperplanes
osculating to~$C$. For this version of duality, the ``duality
theorem'' $(C^\vee)^\vee=C$ in characteristic~$0$ also holds
(see~\cite[Theorem 5.1]{Piene}).

In this note we show that there exist many curves $C\subset\PP^3$ for
which $C$ and $C^\vee$ are projectively equivalent: there exists a
projective (linear) isomorphism $\PP^3\to(\PP^3)^*$ that takes $C$
to~$C^\vee$. In particular, any smooth projective curve can be
embedded in $\PP^3$ as ``osculating self-dual''.

Analogs of this ``osculating'' duality can be defined for varieties of
higher dimension as well. To wit, if $X\subset\PP^{2k+1}$ is a
$k$-dimensional variety such that its second osculating space at
the general point is a hyperplane, then one may define
$X^\vee\subset(\PP^{2k+1})^*$ as closure of the set of points
corresponding to these osculating hyperplanes; for each $k$, we
construct a large family of $k$-dimensional varieties
$X\subset\PP^{2k+1}$ such that the second osculating hyperplanes at
the general point is a hyperplane and $X^\vee$ is projectively equivalent
to~$X$. 

The proofs are based on the following observation: if a
$k$-dimensional subvariety $X\subset \PP^{2k+1}$ is Legendrian with
respect to a contact structure on $\PP^{2k+1}$ then its second
osculating space at the general point is at most $2k$-dimensional.

For $k=2$, surfaces in $\PP^5$ with four-dimensional general second
osculating plane were studied by ancient Italian geometers
(see~\cite{Segre,Togliatti}). In particular, Togliatti
in~\cite{Togliatti} classifies all non-ruled surfaces $X\subset\PP^5$
for which $\deg X\le6$ and the general second osculating space is a
hyperplane and essentially shows that some of these surfaces are
osculating self-dual.

It should be noted that self-dual curves in $\mathbb P^2$ are much
harder to construct. In particular, all known self-dual plane curves
seem to have genus of normalization~$0$ or~$1$. (In the old
paper~\cite{Hollcroft}, which apparently contains many examples of
self-dual plane curves, a curve $C$ is called self-dual if several
numeric invariants of $C$ and $C^*$ are the same, which is, of course,
a weaker condition than projective equivalence.)

The paper is organized as follows. Section~\ref{sec:cs} is devoted to
contact structures on $\PP^{2n-1}$'s an projectivisations of cotangent
bundles to $\PP^n$'s, in Section~\ref{sec:surfaces} we construct
osculating self-dual curves and varieties, and in
Section\ref{sec:conclusion} we show that there exist osculating
self-dual varieties that cannot be obtained by the main construction
of the paper. Section~\ref{sec:prelim} is devoted to preliminaries.

\subsection*{Acknowledgements}
I would like to thank Alexei Penskoi for attracting my attention to
the paper~\cite{Bryant}, Fyodor Zak and Nikita Kalinin for useful
doscussions, and Jason Starr and Robert Bryant for valuable
consultations at \texttt{mathoverflow.net}.

\section{Notation, conventions, and preliminaries}\label{sec:prelim}

\subsection{Generalities}

The base field is the field \C of complex numbers. 

Two subsets $Y_1\subset\PP^n$, $Y_2\subset\PP^n$ will be called
\emph{projectively equivalent} if there exists a projective (linear)
isomorphism $F\colon\PP^n\to\PP^n$ such that $F(Y_1)=Y_2$.

If \E is a vector space or a vector bundle, then (closed) points of
the projectivisation $\PP(\E)$ are lines in (the fibers of)~$E$, not
hyperplanes. By $\PP^*(\E)$, where \E is a vector bundle, we mean
$\PP(\E^*)$; so, points of $\PP^*(\E)$ are hyperplanes in the fibers
of~\E.

If $L\subset\PP(E)$ is a linear (projective) subspace, then the
uniquely determined linear subspace $\hat L\subset E$ such that
$L=\PP(\hat L)$ is called \emph{deprojectivisation} of~$L$. If
$Y\subset\PP(E)$ is a projective variety then by its
deprojectivisation we mean the subvariety $\hat
Y=\overline{\pi^{-1}(Y)}\subset E$, where $\pi\colon
E\setminus\{0\}\to\PP(E)$ is the canonical projection.

\subsection{Contact structures}
 
A \emph{contact structure} on a smooth variety $X$ is a codimension
$1$ subbundle \Ss of the tangent bundle $\T_X$ satisfying a certain
non-degeneracy condition (a precise definition can be found for
example in~\cite{Kleiman}; see also the sketch of proof of
Proposition~\ref{folklore}). If $X$ is a variety with a contact
structure~\Ss, then the fiber of the vector bundle \Ss at a point $x\in X$
is denoted by $\Ss_x\subset \T_{x,X}$ and called \emph{contact
  hyperplane at $x$}. Contact structures can exist only on
odd-dimensional varieties.

If $X$ is a variety with the contact structure \Ss, then a closed
subvariety $Y\subset X$ is called \emph{integral subvariety} of the
structure~\Ss if $\T_yY\subset\Ss_y$ for each smooth point $y\in
Y$. 

Locally, each contact structure on $X$ can be defined as (closure of)
the family of hyperplanes in tangent spaces that are kernels of a
nonvanishing $1$-form $\omega\in\Gamma(\Omega^1_U)$, where $U\subset X$
is a Zariski open set (globally this $\omega$ is a section of
$\Omega^1_X\otimes(\T_x/\Ss)$). A subvariety $Y\subset X$ is integral
if and only if the restriction of $\omega$ to its smooth part is zero.

If $X$ is a variety with a contact structure, $\dim X=2n-1$, then
integral subvarieties of (maximal possible) dimension $n-1$ are called
\emph{Legendrian subvarieties of $X$} with respect to this contact
structure.

\subsection{Osculating spaces and osculating duality}

If $X\subset\PP^N$ is a projective variety and $x\in X$ is a
non-singular point, one says that a hyperplane $H\subset\PP^n$
\emph{osculates at $x$ to order $s$} if $H\ni x$ and the local
equation of $H\cap X$ in the local ring $\Oo_{x,X}$ lies in $\m_x^{s+1}$,
where $\m_x\subset\Oo_{x,X}$ is the maximal ideal. Analytically this
means the following: if \lst zn are analytic local coordinates on $X$
near $x$ and $f$ is a local equation of $H$ at $x$, then the power
series expansion of $f$ begins with terms of degree $\ge s+1$. A
hyperplane osculates at $x$ to order~$1$ if and only if $H$ is tangent
to $X$ at~$x$.

The intersection of all hyperplanes osculating to $X$ at $x$ to
order~$s$ is called \emph{$s$'th osculating space to $X$ at $x$} and
denoted by $\Osc^s_xX$ (if no hyperplane osculates to order $s$ at
$x$, we assume that $\Osc^s_xX$ coincides with the linear span of
$X$). The space $\Osc^1_xX\subset \PP^N$ is nothing but the embedded
tangent space~$T_xX\subset\PP^n$.

If $X\subset \PP^n$ is a curve that is not contained in a hyperplane,
then for general $x\in X$ one has $\dim\Osc_x^jX=j$ for $1\le j\le
n-1$, and $\Osc_x^{n-1}C$ is exactly the osculating hyperplane as
defined in the introduction.

If $x\in X$ is a non-singular point and $H$ is a tangent hyperplane to
$X$ at~$x$, then the image of the local equation of $H\cap X$ in
$\m_x^2/\m_x^3$ defines (up to a multiplicative constant) an element
of $\Sym^2(\m_x/\m_x^2)=\Sym^2(\T_xX)^*$. All the symmetric bilinear forms
in the tangent space (with all their multiples) corresponding, via the
procedure above, to hyperplanes $H\supset T_xX$, form a linear
subspace of $\Sym^2\T_xX$. This linear space is called \emph{second
  fundamental form of $x$ at $X$}; it will be denoted
$\Phi^2_x(X)$. If we fix local analytic coordinates \lst zn at $x$,
then elements of $\Phi^2_x(X)$ are polynomials of degree $s$ in
\lst{dz}n; abusing the language, we will write them as polynomials in
\lst zn. One has $\dim\Phi^2_x(X)=\dim\Osc_x^2X-\dim X$.

Suppose that $X\subset\PP^{2n-1}$ is a projective variety of
dimension $n-1$ that is not contained in a hyperplane.  The expected
value of $\dim\Osc_x^2X$ for general $x\in X$ is $2n-1$, i.\,e., in
the general case this osculating space coincides with the ambient
$\PP^{2n-1}$. If, however, for general $x$, $\Osc_x^2X$ is a hyperplane, or,
equivalently, $\dim\Phi_x^2(X)=n-1=\dim X$ (if $\dim
X=1$, this is automatic, otherwise it is a non-trivial condition), we denote by
$X^\vee\subset(\PP^{2n-1})^*$ the closure of the set of hyperplanes
$\Osc_x^2(X)$ for general $x\in X$ (in the paper~\cite{Valles} this
variety is denoted by $X^{3\vee}$). The subvariety $X^\vee\subset
(\PP^n)^*$ will be called \emph{osculating dual} to~$X$. If $X$ is
  projectively equivalent to $X^\vee$, we will say that $X$ is
  \emph{osculating self-dual}.

\section{Well-known contact structures on $\PP^{2n-1}$ and
  $\PP^*(\T\PP^n)$}\label{sec:cs}

If $E$ is a vector space of even dimension $2n$ and $B$ is a
non-degenerate skew-symmetric bilinear form on $E$, we define a
contact structure on $\PP^{2n-1}=\PP(E)$ as follows. If $x=(v)$ is a
point of $\PP(E)$, where $v\in E\setminus\{0\}$, then the contact
hyperplane $\Ss_x\subset \T_x\PP(E)$ is $\T_x\PP(v^\perp)$, where
$v^\perp\subset E$ is the skew-orthogonal complement to $v$ with
respect to~$B$. If in coordinates the form $B$ is defined by the
formula
\begin{equation}\label{eq:B}
B((z_0,\ldots,z_{2n}),(w_0,\ldots,w_{2n}))=
\sum_{i=0}^{n-1}(z_{2i}w_{2i+1}-z_{2i+1}w_{2i}),
\end{equation}
then on the affine open set $\{(1:z_1:\ldots:z_{2n-1})\subset\PP^{2n-1}\}$
the contact structure corresponding to $B$ can be defined by the form
\begin{equation}\label{eq:omega}
\omega=dz_1+\sum_{i=1}^{n-1}(z_{2i}dz_{2i+1}-z_{2i+1}dz_i).
\end{equation}
Since any two non-degenerate skew-symmetric forms are equivalent, any
two contact structures on $\PP^{2n-1}$ obtained by the above
construction are mapped to each other by a projective automorphism of
$\PP^{2n-1}$.

Actually, there are no other contact structures on projective
spaces. The following proposition seems to belong to folklore.

\begin{prop}\label{folklore}    
Any contact structure on $\PP^{2n-1}=\PP(E)$, $\dim E=2n$, corresponds
to a non-degenerate skew-symmetric bilinear form on~$E$.
\end{prop}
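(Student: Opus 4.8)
The plan is to show that any contact structure on $\PP^{2n-1}=\PP(E)$ can be encoded by a global $1$-form on the total space of $E\setminus\{0\}$ that is homogeneous of the right degree, and that the non-degeneracy condition forces this form to come from a skew-symmetric bilinear form. The key observation is that a contact structure is a line subbundle $\T_X/\Ss$ of the tangent bundle quotient, so the defining data is a twisted $1$-form $\omega\in\Gamma(\Omega^1_X\otimes\Ll)$ where $\Ll=\T_X/\Ss$ is a line bundle. First I would identify this line bundle: on $\PP^{2n-1}$ the Picard group is $\Z$, so $\Ll=\Oo(d)$ for some integer $d$, and I would pin down $d$ using the non-degeneracy condition, which says that $\omega\wedge(d\omega)^{n-1}$ is a nowhere-vanishing section of $\Omega^{2n-1}_X\otimes\Ll^{\otimes n}=\Oo(-2n)\otimes\Oo(dn)$ (using $K_{\PP^{2n-1}}=\Oo(-2n)$). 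For this to be a trivializing section we need $dn=2n$, hence $d=2$, so $\Ss$ is the kernel of a section $\omega\in\Gamma(\Omega^1_{\PP^{2n-1}}(2))$.

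**Next I would** compute the relevant cohomology group $H^0(\PP^{2n-1},\Omega^1(2))$ and identify it explicitly with the space of skew-symmetric bilinear forms on $E$. The cleanest route is via the Euler sequence
\begin{equation*}
0\to\Omega^1(2)\to E^*\otimes\Oo(1)\to\Oo(2)\to0,
\end{equation*}
twisted appropriately from the standard $0\to\Omega^1\to\Oo(-1)^{\oplus 2n}\to\Oo\to0$. Taking global sections and using $H^0(\Oo(1))=E^*$, $H^0(\Oo(2))=\Sym^2 E^*$, together with the vanishing $H^1(\Omega^1(2))=0$ (which holds for $n\ge 1$ by Bott's formula), I would identify $H^0(\Omega^1(2))$ with the kernel of the multiplication map $E^*\otimes E^*\to\Sym^2 E^*$, which is precisely $\wedge^2 E^*$, the space of skew-symmetric bilinear forms. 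This is the conceptual heart: every candidate defining form $\omega$ automatically corresponds to some $B\in\wedge^2 E^*$.

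**Finally I would** check that the non-degeneracy of the contact structure translates exactly into non-degeneracy of the bilinear form $B$, and that the contact hyperplane $\Ss_x$ is then the skew-orthogonal complement $\T_x\PP(v^\perp)$ as in the construction at the start of the section. Unwinding the Euler sequence identification, the form $\omega$ attached to $B$ vanishes on tangent vectors to $\PP(v^\perp)$ at $x=(v)$, so the subbundle it cuts out is the one described by the construction; and the top-form $\omega\wedge(d\omega)^{n-1}$ being nowhere zero is equivalent to $B^{\wedge n}\ne 0$, i.e. $B$ non-degenerate.

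**The main obstacle** I expect is the bookkeeping in the middle step: correctly threading the Euler sequence through the twist to land on $\wedge^2E^*$ rather than $\Sym^2E^*$, and verifying the cohomology vanishing that makes the identification of $H^0(\Omega^1(2))$ exact. One must be careful with the convention (stated in the preliminaries) that points of $\PP(E)$ are \emph{lines} in $E$, since this fixes the direction of the Euler sequence and hence whether one obtains symmetric or alternating forms; getting this sign/duality convention wrong would produce $\Sym^2$ and break the argument. The non-degeneracy translation, by contrast, is a routine pointwise linear-algebra check once the global identification is in place.
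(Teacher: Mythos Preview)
Your proposal is correct and follows essentially the same route as the paper's sketch: pin down $\Ll=\T_{\PP(E)}/\Ss\cong\Oo(2)$, then use the twisted Euler sequence to identify $H^0(\Omega^1_{\PP(E)}(2))\cong\bigwedge^2E^*$. The only minor variation is in the first step---the paper deduces $\Ll\cong\Oo(2)$ from a Chern-class computation based on the isomorphism $\Ss\cong\Ss^*\otimes\Ll$ induced by the Lie-bracket pairing $\bigwedge^2\Ss\to\Ll$, whereas you use the equivalent non-degeneracy criterion that $\omega\wedge(d\omega)^{n-1}$ is a nowhere-vanishing section of $K_{\PP(E)}\otimes\Ll^{\otimes n}$; both are standard and lead to the same conclusion.
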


\begin{proof}[Sketch of proof]
If a contact structure is defined by a subbundle
$\Ss\subset\T_{\PP(E)}$, put $\Ll=\T_{\PP(E)}/\Ss$; \Ll is an
invertible sheaf. The mapping
\[
(s_1,s_2)\mapsto [s_1,s_2]\bmod \Ss,
\]
where $s_1$ and $s_2$ are local sections of $\T_{\PP(E)}$ and the
brackets stand for commutator of vector fields, is a sheaf
homomorphism $\Ss\otimes_{\Oo_{\PP(E)}}\Ss\to\Ll$ which factors
through $\bigwedge^2\Ss$; one of the equivalent definitions of contact
structures is that \Ss is a contact structure if and only if the
resulting homomorphism $\psi\colon
\bigwedge^2\Ss\to\T_{\PP(E)}/\Ss=\Ll$ is a non-degenerate
skew-symmetric form on \Ss with values in \Ll. Since $\psi$ is
non-degenerate, it induces an isomorphism
\begin{equation}\label{Chern}
\Ss\xrightarrow\approx\Ss^*\otimes\Ll. 
\end{equation}
Since $c_1(\Ss)=c_1(\T_{\PP(E)})-c_1(\Ll)$ and
$c_1(\Ss^*\otimes\Ll)=-c_1(\Ss)+(2n-2)c_1(\Ll)$, the
isomorphism~\eqref{Chern} implies that $2nc_1(\Ll)=2c_1(\T_{\PP(E)})$,
whence $\Ll\cong\Oo_{\PP(E)}(2)$. The epimorphism
$\pi\colon\T_{\PP(E)}\to\Ll=\Oo_{\PP(E)}(2)$ is a section of
$\Omega^1_{\PP(E)}(2)$; it follows from the exact sequence
\[
0\to\Omega^1_{\PP(E)}(2)\to E^*\otimes\Oo_{\PP(E)}\to
\Oo_{\PP(E)}(2)\to 0
\]
(Euler's sequence twisted by $\Oo(2)$) that
$H^0(\Omega^1_{\PP(E)}(2))=\bigwedge^2E^*$. Now the element of
$\bigwedge^2E^*$ corresponding to $\pi$ is the desired skew-symmetric
form.
\end{proof}

The reader may ignore this proposition and further on, every time we
mention a contact structure on $\PP^{2n-1}$, substitute ``a contact
structure corresponding to a skew-symmetric form'' instead.

\begin{prop}\label{Leg<=>iso}
Suppose that $E$ is an even-dimensional vector space of dimension $2n$
endowed with a non-degenerate skew-symmetric bilinear form $B$ and
that $Y\subset\PP(E)$ is a projective subvariety. Then the following
two assertions are equivalent.

\textup{(1)} $Y$ is integrable with respect to the
contact structure corresponding to the form~$B$.

\textup{(2)} For any smooth point $y\in Y$, the deprojectivisation
$\widehat{T_yY}\subset E$ of the embedded tangent space $T_yY\subset
\PP(E)$ is isotropic with respect to the form~$B$.
\end{prop}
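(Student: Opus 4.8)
The plan is to pass to the affine cone and to reinterpret both conditions as statements about the vanishing of differential forms on it. Write $\pi\colon E\setminus\{0\}\to\PP(E)$ for the canonical projection and let $\hat Y=\overline{\pi^{-1}(Y)}\subset E$ be the deprojectivisation (affine cone) of~$Y$. If $y\in Y$ is a smooth point and $v\in\pi^{-1}(y)$, then $v$ is a smooth point of $\hat Y$ and one has the standard identification $\T_v\hat Y=\widehat{T_yY}$, the deprojectivisation of the embedded tangent space; in particular $v\in\T_v\hat Y$, since the radial direction is tangent to the cone. The differential $d\pi_v\colon E\to\T_y\PP(E)$ is surjective with kernel $\langle v\rangle$, and it carries $\T_v\hat Y$ onto $\T_yY$ and $v^\perp$ onto $\Ss_y=\T_y\PP(v^\perp)$, both maps having kernel $\langle v\rangle$ (note that $v\in v^\perp$ because $B$ is alternating). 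Since $\langle v\rangle$ lies in both $\T_v\hat Y$ and $v^\perp$, pushing forward by $d\pi_v$ shows that assertion~(1), namely $\T_yY\subset\Ss_y$, is equivalent to $\widehat{T_yY}\subset v^\perp$, that is, to $B(v,a)=0$ for every $a\in\widehat{T_yY}$.

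Next I would introduce on $E$ the linear $1$-form $\alpha$ given by $\alpha_v(a)=B(v,a)$; in the coordinates of~\eqref{eq:B} this is $\alpha=\sum_{i=0}^{n-1}(z_{2i}\,dz_{2i+1}-z_{2i+1}\,dz_{2i})$, and a direct computation yields $d\alpha=2B$, where $B$ is regarded as the constant (symplectic) $2$-form on~$E$. By the previous paragraph, assertion~(1) says precisely that the pullback $i^*\alpha$ under the inclusion $i$ of the smooth locus of $\hat Y$ into $E$ vanishes identically, while assertion~(2) says that $i^*B$ vanishes, i.e. $B(a,b)=0$ for all $a,b\in\widehat{T_yY}=\T_v\hat Y$. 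The implication (2)$\Rightarrow$(1) is then immediate and purely pointwise: if $B$ vanishes on $\widehat{T_yY}$ and $v\in\widehat{T_yY}$, then $B(v,a)=0$ for all $a\in\widehat{T_yY}$. For the converse (1)$\Rightarrow$(2) I would use that $i^*\alpha=0$ holds identically along the smooth locus, so that $2\,i^*B=i^*(d\alpha)=d(i^*\alpha)=0$ by naturality of the exterior derivative; hence $B(a,b)=0$ for all $a,b\in\T_v\hat Y=\widehat{T_yY}$, which is exactly isotropy.

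The one delicate point, and the crux of the argument, is the implication (1)$\Rightarrow$(2): it is genuinely a statement about a neighbourhood rather than a single point, and it depends on being allowed to differentiate the identity $i^*\alpha=0$ along $\hat Y$. This is legitimate precisely because assertion~(1) is assumed at \emph{every} smooth point, so $i^*\alpha$ is the zero section over the whole smooth locus and its exterior derivative vanishes there; the commutation $d\circ i^*=i^*\circ d$ then converts the vanishing of $\alpha$ into the vanishing of $d\alpha=2B$. I would also record the routine facts used in the reduction: that $\T_v\hat Y=\widehat{T_yY}$ for $v$ lying over a smooth point, and that $d\pi_v$ induces the stated identifications of $\T_yY$ and $\Ss_y$; these are standard, but they should be stated explicitly so that the equivalence established in the first paragraph is airtight.
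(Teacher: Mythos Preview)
Your argument is correct and follows essentially the same route as the paper: pass to the affine cone $\hat Y\subset E$, show that assertion~(1) is equivalent to the vanishing on $\hat Y_{\mathrm{sm}}$ of a $1$-form whose exterior derivative is (a multiple of) the constant $2$-form $B$, and obtain assertion~(2) by applying $d$ and using $d\circ i^*=i^*\circ d$. The only difference is cosmetic: the paper writes the coordinate form $\eta=\sum z_{2i}\,dz_{2i+1}$, whereas you use the tautological $\alpha_v=B(v,\cdot)=\sum(z_{2i}\,dz_{2i+1}-z_{2i+1}\,dz_{2i})$; your $\alpha$ is in fact the cleaner choice, since its kernel is literally $v^\perp=\pi^*\Ss_v$ at every point of $E\setminus\{0\}$, making the equivalence ``$i^*\alpha=0\Longleftrightarrow Y$ integral'' transparent.
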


\begin{proof}
The implication $(2)\Rightarrow(1)$ is very easy. To wit, if $y=(v)$,
where $v\in F=\widehat{T_yY}$, then $B(v,w)=0$ for any $w\in F$ since
$F$ is isotropic, so $F\subset v^\perp$ and $T_yY\subset T_y\PP(E)$ is
contained in the contact hyperplane at the point~$y$.

To prove the implication $(1)\Rightarrow(2)$ denote by $\pi\colon
E\setminus\{0\}\to \PP(E)$ the natural projection. Since $\pi$ is
submersive, the pullback $\pi^*\Ss\subset\T_{E\setminus\{0\}}$ is a
subbundle of codimension~$1$, where \Ss is the contact structure
corresponding to~$B$.  If the bilinear form $B$ is defined by the
formula~\eqref{eq:B}, then the family of hyperplanes in tangent spaces
defined by the subbundle $\pi^*\Ss$ is the family of kernels of the
$1$-form $\eta=\sum_{i=0}^{2n-1}z_{2i}dz_{2i+1}$. Since $Y$ is an
integral variety of \Ss, the form $\eta$ vanishes on $\hat
Y_{\mathrm{sm}}$, whence $d\eta|_{\hat Y_{\mathrm{sm}}}=0$. Since
$d\eta=\sum_{j=0}^{2n-1}dz_{2j}\wedge dz_{2j+1}$, this vanishing
is equivalent to the assertion that tangent spaces to $Y_{\mathrm{sm}}$
are isotropic with respect to $B$. Since these tangent spaces are
deprojectivisations of embedded tangent spaces to~$Y$, we are done.
\end{proof}

\begin{prop}\label{degenerate=>cone}
Suppose that $\PP^{2n-1}=\PP(E)$ is endowed with the contact structure
corresponding to a non-degenerate skew-symmetric bilinear form~$B$
on~$E$ and that $Y\subset\PP^{2n-1}$ is a Legendrian projective
subvariety with respect to this contact structure. If $Y$ is contained
in a hyperplane in $\PP^{2n-1}$, then $Y$ is a cone over a variety
$Y_1\subset \PP^{2n-3}=\PP(E_1)$, where $E_1\subset E$ is a linear
subspace of codimension~$2$; besides, the restriction of the form $B$
to $E_1$ is non-degenerate and the variety $Y_1\subset\PP(E_1)$ is
Legendrian with respect to the contact structure
corresponding to restriction~$B|_{E_1}$.
\end{prop}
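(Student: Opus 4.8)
The plan is to reformulate everything inside the vector space $E$ via deprojectivisation and Proposition~\ref{Leg<=>iso}. Since $B$ is non-degenerate, the map $E\to E^*$, $w\mapsto B(w,-)$, is an isomorphism, so the deprojectivisation of the hyperplane containing $Y$ can be written as $w^\perp$ for some $w\in E\setminus\{0\}$, determined up to a scalar. Because $B$ is skew-symmetric we have $B(w,w)=0$, so $w\in w^\perp$; moreover the radical of the restriction $B|_{w^\perp}$ is exactly $w^\perp\cap(w^\perp)^\perp=\C w$. The point $p=(w)\in\PP(E)$ will turn out to be the vertex of the cone. The first thing I would record is that, since $Y$ is Legendrian, $\dim Y=n-1$, so for any smooth point $y=(v)\in Y$ the deprojectivised tangent space $F_v:=\widehat{T_yY}$ has dimension $n$ and, by Proposition~\ref{Leg<=>iso}, is isotropic; hence $F_v$ is a \emph{Lagrangian} (maximal isotropic) subspace of~$E$.

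The heart of the argument is to show that $w\in F_v$ for every smooth $y=(v)\in Y$. As $Y\subset\PP(w^\perp)$ we have $\hat Y\subset w^\perp$, and since $w^\perp$ is a linear subspace, $F_v\subset w^\perp$ as well. Now consider the quotient $w^\perp/\C w$: it carries the non-degenerate symplectic form induced by $B$ and has dimension $2n-2$, and the image of the Lagrangian $F_v$ there is isotropic, hence of dimension at most $n-1$. If $w\notin F_v$ then $F_v\cap\C w=0$, so $F_v$ injects into $w^\perp/\C w$, forcing $\dim F_v\le n-1$, a contradiction. Thus $w\in F_v$, i.e.\ the constant direction $w$ is tangent to $\hat Y$ at every smooth point. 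Consequently the directional derivative $\partial_w$ annihilates every $f\in I(\hat Y)$ on the smooth locus, hence everywhere, so $\partial_w\bigl(I(\hat Y)\bigr)\subset I(\hat Y)$; equivalently $\hat Y$ is invariant under translation by $\C w$, which is precisely the statement that $Y$ is a cone with vertex $p=(w)$.

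To produce the base of the cone I would split off a hyperbolic plane: choose $u\in E$ with $B(w,u)=1$ and set $E_1=\{w,u\}^\perp=w^\perp\cap u^\perp$, a subspace of codimension~$2$ on which $B$ restricts non-degenerately (it is the symplectic complement of the non-degenerate plane $\langle w,u\rangle$), and note that $w^\perp=\C w\oplus E_1$. Since $\hat Y\subset w^\perp$ is $\C w$-invariant, putting $\hat Y_1:=\hat Y\cap E_1$ gives $\hat Y=\hat Y_1\oplus\C w$, so $Y$ is the cone with vertex $p$ over $Y_1=\PP(\hat Y_1)\subset\PP(E_1)=\PP^{2n-3}$. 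It remains to check that $Y_1$ is Legendrian for $B|_{E_1}$: its dimension is $n-2$, the correct value in $\PP^{2n-3}$, and for a smooth point $v_1\in\hat Y_1$ one has $T_{v_1}\hat Y=T_{v_1}\hat Y_1\oplus\C w$, so $T_{v_1}\hat Y_1\subset E_1$ is a subspace of the Lagrangian $T_{v_1}\hat Y$ and is therefore isotropic for $B|_{E_1}$; Proposition~\ref{Leg<=>iso} then yields integrability, and the dimension count upgrades this to Legendrian.

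The step I expect to be the main obstacle is the passage from ``$w\in F_v$ for all smooth $v$'' to the honest cone structure and the clean decomposition $\hat Y=\hat Y_1\oplus\C w$ — that is, verifying that a global direction tangent along the smooth locus genuinely forces translation-invariance of $\hat Y$, and matching up smooth points of $\hat Y$, of $\hat Y_1$, and of $Y_1$. The truly geometric input, however, is the small linear-algebra observation that a Lagrangian subspace trapped inside the degenerate hyperplane $w^\perp$ must contain its one-dimensional radical $\C w$, and it is exactly this radical direction that becomes the vertex.
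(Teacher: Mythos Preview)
Your argument is correct and rests on the same linear-algebraic kernel as the paper's proof --- a Lagrangian subspace contained in $w^\perp$ must contain the radical $\C w$ --- but you organize the deduction of the cone structure differently. The paper projects $Y$ from the point $p=(w)$ to $\PP(E_1)$, invokes Sard's theorem to conclude that the generic tangent space of the image $Y_1$ has isotropic deprojectivisation in $E_1$, and then derives a contradiction from $\dim Y_1=n-1$ (an isotropic subspace of $E_1$ cannot have dimension $n$); the dimension drop forces $Y$ to be a cone. You instead argue pointwise: at every smooth point the Lagrangian tangent $F_v$ already contains $w$, so the constant direction $w$ is tangent along the smooth locus and the ideal $I(\hat Y)$ is $\partial_w$-stable, whence $\hat Y$ is translation-invariant. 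Your route is slightly more elementary in that it avoids Sard's theorem, and it gives the vertex directly; the paper's projection viewpoint, on the other hand, makes the identification $Y_1=\pi_p(Y)$ and the Legendrian property of $Y_1$ immediate without the smoothness bookkeeping you flag at the end. Both arguments require the same mild care that $B|_{E_1}$ is non-degenerate, which you handle by splitting off the hyperbolic plane $\langle w,u\rangle$.
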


\begin{proof}
Suppose that a Legendrian (with respect to $B$) projective subvariety
$Y\subset \PP(E)$ lies in a hyperplane $H\subset \PP(E)$. One has
$H=\PP(v^\perp)$ for some $v\in E\setminus\{0\}$. Since the form $B$
is non-degenerate, there exists a $(2n-2)$-dimensional linear subspace
$E_1\subset v^\perp$ and a vector $w\in E_1^\perp$ such that
$B(v,w)\ne 0$ and $E=E_1\oplus \langle v,w\rangle$ is a
skew-orthogonal direct sum; the restriction of $B$ to $E_1$ is again
non-degenerate.

Put $p=(v)\in\PP(E)$ and denote by $\pi_p\colon H\dasharrow
\PP(E_1)$the projection from $p$. If $L\subset H$ is a linear subspace
such that the deprojectivisation $\hat L$ is isotropic, then the
deprojectivisation of $\pi_p(L)$ in $\PP(E_1)$ is also isotropic. Put
$Y_1=\pi_p(Y)$.  It follows from
Proposition~\ref{Leg<=>iso} that deprojectivisations of tangent spaces
to smooth points of~$Y$ are isotropic; now Sard's theorem together
with the above observation implies that deprojectivisations of tangent
spaces at almost all points of $Y_1$ are also isotropic.

If $\dim Y_1=\dim Y=n-1$, we obtain a contradiction since in that case
dimension of these deprojectivisations is $n>2(n-1)/2$. Thus, $\dim
Y_1=\dim Y-1=n-1$ and $Y$ is a cone over $Y_1$ with
vertex~$p$. Finally, since (deprojectivisations of) tangent spaces to
$Y_1$ are isotropic, the subvariety $Y_1\subset\PP(E_1)$ is Legendrian
with respect to the restriction of $B$ to~$E_1$.
\end{proof}

\begin{cor}\label{deg=>line}
Suppose that $C$ is a projective Legendrian curve in $\PP^3$ with a
contact structure. If $C$ is contained in a plane, then $C$ is a line.\qed
\end{cor}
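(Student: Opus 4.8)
The plan is simply to specialise Proposition~\ref{degenerate=>cone} to the smallest case, namely $n=2$. Writing $\PP^3=\PP(E)$ with $\dim E=4=2n$, a Legendrian curve $C$ is exactly a Legendrian subvariety of dimension $n-1=1$, and the hypothesis that $C$ lies in a plane is precisely the hypothesis that $C$ is contained in a hyperplane of $\PP^3$. So the preceding proposition applies verbatim.

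First I would invoke Proposition~\ref{degenerate=>cone}: it guarantees that $C$ is a cone, with some vertex $p$, over a Legendrian subvariety $Y_1\subset\PP(E_1)$, where $E_1\subset E$ is a subspace of codimension~$2$. Hence $\dim E_1=2$ and $\PP(E_1)=\PP^1$. Tracking the dimension count furnished by that proof gives $\dim Y_1=\dim C-1=0$, so for an irreducible $C$ the image $Y_1$ is a single point $q\in\PP^1$.

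Finally, the cone with vertex $p$ over a single point $q$ is nothing but the line through $p$ and $q$, whence $C$ is a line. There is essentially no obstacle here: the corollary is a direct specialisation of the preceding proposition to the first nontrivial value of $n$, and the only step deserving a word of care is the elementary observation that a cone over a zero-dimensional variety collapses to a single line, which is immediate once the dimension count $\dim Y_1=0$ is in hand.
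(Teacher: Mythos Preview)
Your argument is correct and matches the paper's intent: the corollary is marked with an immediate \qed, signalling that it is nothing more than the specialisation of Proposition~\ref{degenerate=>cone} to $n=2$, exactly as you spell out. Your added remark that the cone over a $0$-dimensional $Y_1$ is a line (using irreducibility of $C$) is the only detail one might want to make explicit, and you handle it appropriately.
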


If $X$ is a smooth variety, one can define a canonical contact
structure on $V=\PP^*(\T_X)$ as follows. If $p=(x,H)\in V$, where
$x\in X$ and $H\subset\T_xX$ is a hyperplane, then
$\Ss_p=\pi_*^{-1}(H)\subset \T_pV$, where $\pi\colon V\to X$ is the
projection and $\pi_*$ is the derivative of~$\pi$.

If $Y\subset X$ is a subvariety, then it is easy to check that
\begin{equation*}
\Pc_Y=\overline{\{(y,H)\in V=\PP^*(\T_X)\colon y\in Y_{\mathrm{smooth}},
  H\supset \T_yY\}}
\end{equation*}
is a Legendrian subvariety of $V$. It follows from Sard's theorem that
any Legendrian subvariety of $V$ has the form $\Pc_Y$ for some
$Y\subset X$ (see~\cite{Kleiman}). We will say that $\Pc_Y$ is the
\emph{conormal variety} of $Y$.

We will be using the above construction for $X=\PP^n$. In this
situation $\PP^*(\T_{\PP^n})$ is just the incidence relation:
\[
\PP^*(\T_{\PP^n})=\{(x,H)\in \PP^n\times(\PP^n)^*\colon x\in H\}.
\]
In coordinates, if $(x_0:\ldots:x_n)$ are homogeneous coordinates in
$\PP^n$ and $(y_0:\ldots:y_n)$ are the dual homogeneous coordinates in
$(\PP^n)^*$, one has
\begin{equation}\label{eq:PTP}
P^*(\T_{\PP^n})=\{((x_0:\ldots:x_n),(y_0:\ldots:y_n))\in
\PP^n\times(\PP^n)^*\colon \sum x_iy_i=0\}.
\end{equation}
Thus, $\PP^*(\T_{\PP^n})$ is a hyperplane section of Segre variety
$\PP^n\times\PP^n\subset \PP^{n^2+2n}$. When referring to
$\PP^*(\T_{\PP^n})$ as projective variety we will always mean this
embedding $\PP^*(\T_{\PP^n})\hookrightarrow \PP^{n^2+2n-1}$.

The following result is due essentially to R.~Bryant, at least for
$n=2$ (see~\cite[proof of Theorem F]{Bryant}).

\begin{prop}\label{prop:Bryant}
Suppose that $\PP^{2n-1}$, with homogeneous coordinates
$(z_0:\ldots:z_{2n-1})$, is endowed with the contact structure
corresponding to the skew-symmetric form~\eqref{eq:B} and that
$\PP^*(\T_{\PP^n})$ defined by the equation~\eqref{eq:PTP} is endowed
with the canonical contact structure. Then the rational mapping $\vartheta\colon
\PP^*(\T\PP^n)\dasharrow \PP^{2n-1}$ defined by the formula
\[
\vartheta \colon((x_0:\ldots:x_n),(y_0:\ldots:y_n))
\mapsto(z_0:\ldots:z_{2n-1}),
\]
where
\begin{equation}\label{eq:beta^(-1)}
\begin{aligned}
z_0&=x_0y_1,\\
z_1&=\frac12(x_1y_1-x_0y_0),\\
z_{2k-2}&=x_ky_1,\quad 2\le k\le n,\\
z_{2k-1}&=-\frac12x_0y_k,\quad 2\le k\le n,
\end{aligned}
\end{equation}
is a birational isomorphism that agrees with the named contact
structures. This birational isomorphism induces an isomorphism between
the Zariski open subsets
\begin{equation}\label{eq:def:V}
V=\{((x_0:\ldots:x_n),(y_0:\ldots:y_n))\colon x_0\ne 0, y_1\ne
0\}\subset\PP^*(\T_{\PP^n}) 
\end{equation}
and 
\[
W=\{(z_0: z_1:\ldots: z_{2n-1})\colon z_0\ne 0\}\subset\PP^{2n-1}.
\]
\end{prop}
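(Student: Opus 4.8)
The plan is to verify three assertions in sequence: that $\vartheta$ is a birational isomorphism (inducing the stated isomorphism $V\to W$), that it agrees with the contact structures, and that these two facts are compatible. The cleanest route is to first construct an explicit inverse map on the open sets, then pull back the contact form.

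First I would normalize coordinates on each side. On $W\subset\PP^{2n-1}$ we may set $z_0=1$, and on $V\subset\PP^*(\T_{\PP^n})$ we may use the normalization $x_0=1$, $y_1=1$. With $x_0=1,y_1=1$ the formulas~\eqref{eq:beta^(-1)} collapse to $z_0=1$, $z_1=\tfrac12(x_1-y_0)$, $z_{2k-2}=x_k$, and $z_{2k-1}=-\tfrac12 y_k$ for $2\le k\le n$. From these one reads off the inverse directly: the affine coordinates $x_1,\ldots,x_n$ and $y_0,\ldots,y_n$ (subject to the incidence relation $\sum x_iy_i=0$ with $x_0=y_1=1$) are recovered as rational — in fact polynomial — functions of $z_1,\ldots,z_{2n-1}$. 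Explicitly $x_k=z_{2k-2}$ and $y_k=-2z_{2k-1}$ for $2\le k\le n$, while the incidence constraint $y_0\cdot 1 + x_1\cdot 1 + \sum_{k\ge2}x_ky_k=0$ pins down one remaining relation, letting us solve for $y_0$ (hence for $x_1=2z_1+y_0$). Checking that this assignment lands in $V$ and that the two composites are the identity on $V$ and $W$ is the routine part; it establishes the isomorphism $V\xrightarrow{\approx}W$ and hence birationality.

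The substantive step is compatibility with the contact structures. The canonical contact structure on $\PP^*(\T_{\PP^n})$ is, in the affine chart, the kernel of the form obtained by pulling back the tautological $1$-form $\sum y_i\,dx_i$ along the incidence relation; in our normalization this restricts to a $1$-form $\alpha$ in the coordinates $x_1,\ldots,x_n,y_0,\ldots,y_n$ modulo $\sum x_i\,dy_i$ or $\sum y_i\,dx_i$ (the two agree up to the exact differential of $\sum x_iy_i$, which vanishes on the incidence variety). On the $\PP^{2n-1}$ side the contact structure is $\ker\omega$ with $\omega$ given by~\eqref{eq:omega}. I would then compute $\vartheta^*\omega$ using the collapsed formulas and verify that it equals $\alpha$ up to multiplication by a nowhere-vanishing function on $V$ — since contact structures are defined by $1$-forms only up to such a scalar, this is exactly the agreement claimed. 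Concretely, after substituting $z_1=\tfrac12(x_1-y_0)$, $z_{2i}=x_{i+1}$, $z_{2i+1}=-\tfrac12 y_{i+1}$ into $\omega$ and taking differentials, the pulled-back form should be proportional to the tautological contact form.

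The main obstacle I expect is bookkeeping in this last computation: the index shifts between the $z$-coordinates (paired as $(z_{2i},z_{2i+1})$) and the $(x_k,y_k)$ coordinates are not uniform — note the asymmetry whereby $z_1$ mixes $x_1$ and $y_0$ while the higher $z$'s are pure — so one must track the exceptional first coordinate separately and confirm that the differentials $dz_{2i}\,(=dx_{i+1})$ and $dz_{2i+1}\,(=-\tfrac12 dy_{i+1})$ recombine in $\omega$ to reproduce $\sum y_i\,dx_i$ up to sign, scaling, and an exact term killed by the incidence relation. Once the proportionality constant is identified as nonvanishing on $V$, the claim that $\vartheta$ intertwines the contact structures follows, and combined with the isomorphism $V\to W$ already established this completes the proof.
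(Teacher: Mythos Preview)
Your proposal is correct and follows essentially the same approach as the paper: construct an explicit inverse on the open sets $V$ and $W$, then verify compatibility with the contact structures by pulling back a contact form and checking it matches up to a nonvanishing scalar. The only cosmetic differences are that the paper writes the inverse $\beta$ in homogeneous coordinates and computes $\beta^*\eta=\omega$ (pulling back the canonical form $\eta=dx_1+\sum_{k\ge2}(y_k/y_1)\,dx_k$ to $\PP^{2n-1}$), whereas you work in the affine normalization $x_0=y_1=1$ and pull back $\omega$ in the other direction; these are the same computation read from opposite ends.
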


\begin{proof}
A straightforward check shows that the rational mapping $\beta\colon
\PP^{2n-1}\dasharrow \PP^*(\T_{\PP^n})$ defined by the formulas
\[
\begin{aligned}
x_0&=z_0^2, &y_0&=-z_0z_1+\sum_{j=1}^{n-1}z_{2j}z_{2j+1},\\
x_1&=z_0z_1+\sum_{j=1}^{n-1}z_{2j}z_{2j+1}, & y_1&=z_0^2,\\
x_k&=z_0z_{2k-2}, & y_k&=-2z_0z_{2k-1},\quad 2\le k\le n
\end{aligned}
\]
is inverse to $\vartheta$ and that $\vartheta$ performs an isomorphism
between $V$ and $W$. To check that $\vartheta$ agrees with the contact
structures it suffices to show that its inverse $\beta$ agrees with contact
forms on some non-empty Zariski open set. Put 
\[
V_1=\{((x_0:\ldots:x_n),(y_0:\ldots:y_n))\in
\PP^n\times(\PP^n)^*\colon x_0\ne 0,\ y_0\ne 0,\ y_1\ne 0\};
\]
we may and will assume that $x_0=y_0=1$ on $V_1\subset V$. For each $j$,
$2\leqslant j\leqslant n$, put $\xi_j=y_j/y_1$. Then $(\lst
xn,\lst[2]\xi n)$ are local coordinates on $V_1$, and
it is easy to see that in these $(x,\xi)$ coordinates on $V_1$
the canonical contact structure on $\PP^*(\T_{\PP^n})$ may be defined
as the family of kernels of the form
\begin{equation}\label{eq:eta}
\eta=dx_1+\xi_2dx_2+\ldots +\xi_ndx_n.
\end{equation}
An immediate check shows that $\beta^*\eta=\omega$, where $\eta$ is defined
by~\eqref{eq:eta} and $\omega$ is defined by~\eqref{eq:omega}, so we
are done.
\end{proof}

\begin{prop}\label{prop:beta^(-1)}
The birational isomorphism $\vartheta \colon \PP^*(\T_{\PP^n})\to
\PP^{2n-1}$ is induced by a projection $\pi_L\colon
\PP^{n^2+2n-1}\dasharrow \PP^{2n-1}$, where $L\subset \PP^{n^2+2n-1}$
is a linear subspace of dimension $n^2-1$. The intersection $L\cap
\PP^{n^2+2n-1}$ has the form
\[
L\cap\PP^{n^2+2n-1}=\{(x,H)\in\PP^n\times(\PP^n)^*\colon
x\in H_0,\ H\ni x_0,\ x\in H\},
\]
where $H_0\subset\PP^n$ is the hyperplane defined by the
equation~$x_0=0$ and $p_0\in H_0$ is the point with homogeneous
coordinates $(0:1:0:\ldots:0)$.
\end{prop}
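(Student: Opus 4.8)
The plan is to read off from the defining formulas \eqref{eq:beta^(-1)} that each homogeneous coordinate $z_i$ on the target $\PP^{2n-1}$ is a linear form in the products $x_jy_k$. These products are exactly the Segre coordinates $w_{jk}=x_jy_k$ on the ambient $\PP^{n^2+2n}$ of the Segre variety $\PP^n\times\PP^n$, restricted to the hyperplane $\PP^{n^2+2n-1}=\{\sum_i w_{ii}=0\}$ in which $\PP^*(\T_{\PP^n})$ sits by \eqref{eq:PTP}. Hence the assignment $((x_j),(y_k))\mapsto(z_0:\ldots:z_{2n-1})$ is, literally, the restriction to $\PP^*(\T_{\PP^n})$ of the linear projection $\pi_L\colon\PP^{n^2+2n-1}\dashrightarrow\PP^{2n-1}$ whose $2n$ coordinate forms are the $z_i$ viewed as linear forms in the $w_{jk}$. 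Thus the first assertion reduces to computing the dimension of the center $L$.

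Next I would identify the center as $L=\{z_0=\cdots=z_{2n-1}=0\}$. To get $\dim L=n^2-1$ it suffices to check that the $2n$ forms of \eqref{eq:beta^(-1)} are linearly independent modulo the single relation $\sum_i w_{ii}=0$ defining the hyperplane. This is immediate by inspection: writing $z_0=w_{01}$, $z_{2k-2}=w_{k1}$ and $z_{2k-1}=-\tfrac12 w_{0k}$ for $2\le k\le n$, these $2n-1$ forms are (multiples of) pairwise distinct off-diagonal coordinates, while $z_1=\tfrac12(w_{11}-w_{00})$ is the only form involving diagonal coordinates; since the hyperplane relation involves diagonal coordinates occurring in none of the $z_i$ (e.g. $w_{22}$ when $n\ge2$), no nontrivial combination of the $z_i$ is proportional to the relation, and so the forms remain independent. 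Therefore $\codim_{\PP^{n^2+2n-1}}L=2n$ and $\dim L=(n^2+2n-1)-2n=n^2-1$.

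Finally I would compute the intersection of $L$ with the variety $\PP^*(\T_{\PP^n})$ by substituting $w_{jk}=x_jy_k$ back and solving the system $z_0=\cdots=z_{2n-1}=0$ on the incidence variety $\{\sum_i x_iy_i=0\}$. The vanishing of $z_0$ and of $z_{2k-2}$ for $2\le k\le n$ gives $x_jy_1=0$ for all $j\ne1$; the vanishing of the $z_{2k-1}$ gives $x_0y_k=0$ for $k\ge2$; and $z_1=0$ reads $x_1y_1=x_0y_0$. I would then split on $y_1$. If $y_1\ne0$, the first batch forces $x=(0:1:0:\cdots:0)=p_0$, whereupon $z_1=0$ becomes $x_1y_1=0$, which is impossible, so this case is empty. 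If $y_1=0$, the surviving conditions $x_0y_0=0$, $x_0y_k=0$ $(k\ge2)$ together with $x_0y_1=0$ give $x_0y_j=0$ for all $j$, hence $x_0=0$; conversely $x_0=0$ and $y_1=0$ satisfy every equation of $L$. Translating back, $x_0=0$ means $x\in H_0$, the condition $y_1=0$ means precisely $p_0\in H$, and the remaining equation is the incidence $x\in H$, which is exactly the asserted description.

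The computation is elementary throughout; the only step requiring genuine care is the case analysis in the last paragraph, where one must bring in the equation $z_1=0$ both to discard the spurious locus $y_1\ne0$ and to upgrade the bilinear conditions coming from the other coordinates to the clean conclusion $x_0=0$, rather than stopping at those bilinear relations.
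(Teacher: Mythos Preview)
Your proof is correct and follows essentially the same approach as the paper: both observe that the defining formulas are bihomogeneous of bidegree $(1,1)$, hence linear in the Segre coordinates, and then determine the center $L$ and its intersection with the incidence variety by setting all $z_i$ to zero. You supply more detail than the paper does---in particular you explicitly verify linear independence of the $2n$ forms on the hyperplane and carry out the case analysis on $y_1$, whereas the paper simply asserts that the common zero locus is $\{x_0=y_1=0\}$---but the argument is the same.
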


\begin{proof}
Proposition~\ref{prop:Bryant} shows that $\vartheta $ is defined by
bihomogeneous in $x$'s and $y$'s polynomials of bidegree~$(1,1)$, so
it is a projection of (a hyperplane section of) Serge variety, with
center~$L$ of dimension $(n^2+2n-1)-(2n-1)-1$. The intersection $L\cap
\PP^*(\T_{\PP^n})$ is the set of points where all the polynomials in
the right-hand sides of~\eqref{eq:beta^(-1)} vanish; it is easy to see
that this happens if and only if $x_0=y_1=0$, which implies the proposition.
\end{proof}

\section{Construction of self-dual varieties}\label{sec:surfaces} 

Throughout this section, $\PP^{2n-1}=\PP(E)$, where the
$2n$-dimensional linear space $E$ is endowed with a non-degenerate
skew-symmetric bilinear form~$B$. By contact structure on $\PP^n$ we
will mean the contact structure associated with~$B$. If $p=(v)\in
\PP^{2n-1}$, where $v\in E\setminus\{0\}$, then by
$p^\perp\subset\PP^{2n-1}$ we mean $\PP((v)^\perp)$. Obviously, $p$ is
contained in the hyperplane $p^\perp$.

\begin{prop}\label{Leg=>osc}
If $X\subset \PP^{2n-1}$ is an integral subvariety with respect to a
contact structure, then $\Osc_p^2X\subset p^\perp$ for any smooth
$p\in X$.
\end{prop}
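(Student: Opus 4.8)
The plan is to show that the hyperplane $p^\perp$ is itself an osculating hyperplane of order $2$ to $X$ at $p$; since $\Osc_p^2 X$ is by definition the intersection of all order-$2$ osculating hyperplanes, this immediately yields $\Osc_p^2 X\subset p^\perp$. Concretely, write $p=(v)$ and let $\ell=B(v,-)\in E^*$ be the linear form whose kernel is the deprojectivisation $v^\perp$ of $p^\perp$. It suffices to check that $\ell$ annihilates the deprojectivisation $\widehat{\Osc_p^2 X}$, which is the linear span of the $2$-jet of $X$ at $p$; this is the same as saying that $\ell$ pulled back to $X$ lies in $\m_p^3$.

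First I would fix a local analytic parametrisation $\phi$ of the affine cone $\hat X$ near $v$, with $\phi(0)=v$; recall that $\widehat{\Osc_p^2 X}$ is then spanned by the vectors $\phi(0)$, $\partial_k\phi(0)$ and $\partial_k\partial_l\phi(0)$. Because $p$ is a smooth point, all points of $X$ in a neighbourhood are smooth, so Proposition~\ref{Leg<=>iso} applies throughout this neighbourhood: the deprojectivised tangent spaces of $X$, which are exactly the tangent spaces of the cone $\hat X$ spanned by $\phi(t)$ and the $\partial_k\phi(t)$, are isotropic for $B$. In particular we have the identities $B(\phi(t),\partial_l\phi(t))\equiv 0$ and $B(\partial_k\phi(t),\partial_l\phi(t))\equiv 0$ in a neighbourhood of $0$.

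Then I would evaluate $\ell$ on the three families of generators. For the $0$-jet, $\ell(\phi(0))=B(v,v)=0$ since $B$ is skew. For the $1$-jet, $\ell(\partial_k\phi(0))=B(v,\partial_k\phi(0))=0$ by the tangent-space isotropy above. The only real work is the $2$-jet, which is where integrality must be used beyond first order: differentiating the identity $B(\phi,\partial_l\phi)\equiv 0$ with respect to $t_k$ gives $B(\partial_k\phi,\partial_l\phi)+B(\phi,\partial_k\partial_l\phi)\equiv 0$, and since the first summand vanishes identically by isotropy we obtain $B(\phi,\partial_k\partial_l\phi)\equiv 0$, hence $\ell(\partial_k\partial_l\phi(0))=B(v,\partial_k\partial_l\phi(0))=0$. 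Thus $\ell$ kills every generator of $\widehat{\Osc_p^2 X}$, so $\widehat{\Osc_p^2 X}\subset v^\perp$ and therefore $\Osc_p^2 X\subset p^\perp$. The main obstacle is precisely the second-order term $B(\phi,\partial_k\partial_l\phi)$; the key point is to reach it not by a direct jet computation in an affine chart (which would force one to cancel an antisymmetric expression against the symmetry of the Hessian), but by differentiating the identically vanishing function $B(\phi,\partial_l\phi)$ supplied by the isotropy of all nearby tangent spaces.
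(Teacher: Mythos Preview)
Your proposal is correct and follows essentially the same route as the paper's proof: both lift $X$ to the cone via a local parametrisation $\phi$ (the paper's $v$), use integrality to get $B(\phi,\partial_l\phi)\equiv 0$ and Proposition~\ref{Leg<=>iso} to get $B(\partial_k\phi,\partial_l\phi)\equiv 0$, then differentiate the first identity and cancel the cross term with the second to conclude $B(\phi,\partial_k\partial_l\phi)\equiv 0$. The only cosmetic difference is that the paper treats the diagonal case $k=l$ separately (where the cross term vanishes by skew-symmetry alone), while you handle all $k,l$ uniformly.
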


\begin{proof}
Put $\dim X=d$. Suppose that $p\in X$ is a smooth point and \lst xn
are local analytic coordinates near $x$. Locally (in the classical
topology) near $p$ the variety $X\subset\PP^n$ can be parametrized by
the formula
\[
(\lst xd)\mapsto(v(\lst xd)),
\] 
where $v\colon U\to E\setminus\{0\}$ is a holomorphic immersion and
$U\subset \C^d$ is an open set; the subspace $\Osc^k_pX\subset\PP(E)$ is
projectivisation of the linear space spanned by $v$ an all its partial
derivatives up to the order~$k$. 

Since $X$ is an integral variety of the contact structure associated
with $B$, one has
\begin{equation}\label{eq:4.1}
 B(x,\partial v/\partial x_i)=0,\quad 1\le i\le d
\end{equation}
by definition of the contact structure corresponding to~$B$ and
\begin{equation}\label{eq:4.2}
 B(\partial v/\partial x_i,\partial v/\partial x_j)=0,\quad 1\le
 i,j\le d  
\end{equation}
by Proposition~\ref{Leg<=>iso}. Differentiating~\eqref{eq:4.1} with
respect to $x_i$, one has
\[
B(\partial v/\partial x_i,\partial v/\partial x_i)+B(v,\partial^2
v/\partial x_i^2)=B(v,\partial^2
v/\partial x_i^2)=0,
\]
so $\partial^2 v/\partial x_i^2\in
(v)^\perp$. Differentiating~\eqref{eq:4.1} with respect to $x_j$,
$j\ne i$, one has
\[
B(\partial v/\partial x_j,\partial v/\partial
x_i)+B(v,\partial^2v/\partial x_i\partial
x_j)=B(v,\partial^2v/\partial x_i\partial x_j)=0
\]
(the first summand vanishes by virtue of~\eqref{eq:4.2}), so
$\partial^2v/\partial x_i\partial x_j\in (v)^\perp$. Thus,
$\Osc_p^2X\subset p^\perp$ as required.
\end{proof}

\begin{prop}\label{cor:leg=>self-dual}
Suppose that $X\subset\PP^{2n-1}$ is a Legendrian subvariety with
respect to a contact structure. If $\dim\Osc_p^2X=2n-2$ for
general $p\in X$ and $X$ is not contained in a hyperplane, then $X$ is
osculating self-dual.
\end{prop}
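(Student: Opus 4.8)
The plan is to exhibit the projective isomorphism realizing the self-duality explicitly: it should be the polarity induced by the contact form $B$ itself. Recall that a non-degenerate $B$ gives a linear isomorphism $E\xrightarrow{\sim} E^*$ by $v\mapsto B(v,\cdot)$, hence a projective isomorphism $\Phi\colon\PP(E)\to\PP(E^*)=(\PP^{2n-1})^*$. Concretely, $\Phi$ sends a point $p=(v)$ to the point of the dual space corresponding to the hyperplane $p^\perp=\PP(v^\perp)$, since $\ker B(v,\cdot)=v^\perp$. I would then argue that $\Phi(X)=X^\vee$, which gives osculating self-duality at once.

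The first step is to upgrade the containment of Proposition~\ref{Leg=>osc} to an equality. Since $X$ is Legendrian it is in particular integral, so Proposition~\ref{Leg=>osc} yields $\Osc_p^2X\subset p^\perp$ for every smooth $p\in X$. Now $p^\perp$ is a hyperplane because $B$ is non-degenerate, and by hypothesis $\dim\Osc_p^2X=2n-2$, so $\Osc_p^2X$ is also a hyperplane. A hyperplane contained in another hyperplane must coincide with it, whence $\Osc_p^2X=p^\perp$ for general $p\in X$.

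The second step is to read off the duality. By construction, $X^\vee$ is the closure in $(\PP^{2n-1})^*$ of the set of points corresponding to the hyperplanes $\Osc_p^2X$ for general $p$. Having identified this hyperplane with $p^\perp$, which is exactly $\Phi(p)$, I see that these dual points are precisely $\Phi$ applied to a dense subset of $X$; taking closures gives $\Phi(X)=X^\vee$. Since $\Phi$ is a projective linear isomorphism, $X$ and $X^\vee$ are projectively equivalent, i.e.\ $X$ is osculating self-dual.

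I do not expect a serious obstacle: the crux is the single observation that $B$ simultaneously controls the osculating behaviour (through Proposition~\ref{Leg=>osc}) and supplies the polarity $\Phi$ identifying $\PP(E)$ with its dual. The only point requiring a little care is the dimension count in the first step that forces the inclusion $\Osc_p^2X\subset p^\perp$ to be an equality; this is precisely where the hypothesis $\dim\Osc_p^2X=2n-2$ enters essentially. The assumption that $X$ is not contained in a hyperplane plays no role beyond guaranteeing that $X^\vee$ is defined as in Section~\ref{sec:prelim}.
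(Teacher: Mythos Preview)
Your proof is correct and follows essentially the same approach as the paper: invoke Proposition~\ref{Leg=>osc} to get $\Osc_p^2X\subset p^\perp$, use the dimension hypothesis to turn the inclusion into an equality, and then observe that the polarity $E\to E^*$ induced by~$B$ carries $X$ onto~$X^\vee$. Your remark about the role of the non-degeneracy hypothesis is also accurate.
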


\begin{proof}
Proposition~\ref{Leg=>osc} implies that $\Osc_p^2X\subset p^\perp$;
since dimensions are the same, these linear spaces coincide. Now the
desired linear isomorphism $\PP^n\to(\PP^n)^*$ that maps $X$ to
$X^\vee$ is the one induced by the isomorphism $E\to E^*$
corresponding to the bilinear form~$B$.
\end{proof}

\begin{lemma}\label{P_X:non-deg}
Suppose that $X\subset\PP^n$ is an irreducible subvariety. If $X$ is
not contained in a hyperplane and $X$ is not a cone then the conormal
variety $\Pc_X\subset\PP^*(\T_{\PP^n})$ has a non-empty intersection
with the Zariski open subset $V\subset \PP^*(\T_{\PP^n})$ defined
in~\eqref{eq:def:V}.

In particular, if $n=2$ then $\Pc_X\cap V\ne\varnothing$ provided
that $X$ is not a line.
\end{lemma}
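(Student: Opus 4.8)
The plan is to show that $\Pc_X$ is not contained in the complement of $V$ inside $\PP^*(\T_{\PP^n})$. Writing a point of $\PP^*(\T_{\PP^n})$ as a pair $(x,H)=((x_0:\ldots:x_n),(y_0:\ldots:y_n))$ with $\sum x_iy_i=0$, the open set $V$ of~\eqref{eq:def:V} is the locus $x_0\ne 0,\ y_1\ne 0$, so its complement in $\PP^*(\T_{\PP^n})$ is the union $\{x_0=0\}\cup\{y_1=0\}$. Here $x_0=0$ means that the point $x$ lies on the hyperplane $H_0=\{x_0=0\}$, while $y_1=0$ means that the hyperplane $H$ passes through the point $p_0=(0:1:0:\ldots:0)$. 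Thus $\Pc_X\cap V=\varnothing$ is equivalent to the inclusion $\Pc_X\subseteq\{x_0=0\}\cup\{y_1=0\}$, and I would assume this for contradiction.

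First I would record that $\Pc_X$ is irreducible: since $X$ is irreducible, its smooth locus $X_{\mathrm{sm}}$ is irreducible, and the set $\{(x,H)\colon x\in X_{\mathrm{sm}},\ H\supseteq T_xX\}$ is a projective bundle over $X_{\mathrm{sm}}$ (the fiber over $x$ being the linear system of hyperplanes through $T_xX$), hence irreducible; $\Pc_X$ is its closure. Consequently the assumed inclusion forces $\Pc_X\subseteq\{x_0=0\}$ or $\Pc_X\subseteq\{y_1=0\}$. The first case is immediately excluded: the first projection maps $\Pc_X$ onto $X$, so $\Pc_X\subseteq\{x_0=0\}$ would give $X\subseteq H_0$, contradicting the hypothesis that $X$ is not contained in a hyperplane.

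The substantive case is $\Pc_X\subseteq\{y_1=0\}$, that is, every hyperplane tangent to $X$ passes through $p_0$. For a fixed smooth point $x$, the hyperplanes containing $T_xX$ meet exactly in $T_xX$, so requiring all of them to contain $p_0$ is equivalent to $p_0\in T_xX$; thus the assumption says that $p_0\in T_xX$ for every smooth $x$. I expect this to be the main obstacle, and I would resolve it through the classical cone criterion: if the embedded tangent space at every smooth point contains the fixed point $p_0$, then $X$ is a cone with vertex $p_0$. Concretely, projecting from $p_0$ by $\pi_{p_0}\colon\PP^n\dashrightarrow\PP^{n-1}$, the condition $p_0\in T_xX$ means that the line $\overline{p_0x}$ lies in $T_xX$, so the differential of $\pi_{p_0}|_X$ kills this direction and $\pi_{p_0}|_X$ has rank $\le\dim X-1$ at every smooth point; hence $\dim\overline{\pi_{p_0}(X)}\le\dim X-1$. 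Since $X$ is contained in the cone over $\overline{\pi_{p_0}(X)}$ with vertex $p_0$, whose dimension is then $\le\dim X$, irreducibility of both varieties together with equality of dimensions forces $X$ to equal this cone. This contradicts the hypothesis that $X$ is not a cone and completes the argument.

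Finally, the case $n=2$ follows from the general statement: for an irreducible curve $X\subset\PP^2$, being contained in a hyperplane means being contained in a line, which for an irreducible curve means being that line, and an irreducible cone in $\PP^2$ is likewise a line. Hence assuming only that $X$ is not a line guarantees both that $X$ is not contained in a hyperplane and that $X$ is not a cone, so the general statement applies and yields $\Pc_X\cap V\ne\varnothing$.
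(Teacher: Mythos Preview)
Your proof is correct and follows essentially the same approach as the paper: both arguments reduce to showing that $\Pc_X\not\subset\{x_0=0\}$ (ruled out since $X$ is non-degenerate) and $\Pc_X\not\subset\{y_1=0\}$ (ruled out since all tangent hyperplanes passing through $p_0$ would make $X$ a cone with vertex~$p_0$, the paper citing Sard's theorem for the projection). Your version is a bit more explicit in justifying the irreducibility of $\Pc_X$ and in spelling out the cone criterion via the rank of the projection, but the underlying idea is the same.
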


\begin{proof}
Since $X$ is not contained in a hyperplane,
$X\cap\Ab^n=\{(1:x_1:\ldots:x_n)\}\ne\varnothing$. Thus, to check that
$\Pc_X\cap V\ne\varnothing$ it suffices to check that the coordinate
$y_1$ is not identically zero on $\Pc_Y$. Assume the converse; then
all the tangent hyperplanes to smooth points of $X$ pass through the
point $p=(0:1:0:\ldots:0)$, which is possible only if $X$ is a cone
with vertex $p$ (to justify this assertion, apply Sard's theorem to
the projection with center~$p$), which contradicts the hypothesis.
\end{proof}

\begin{prop}\label{prop:constr_of_selfdual}
  If $X\subset\PP^n$, $n\ge 3$, is a general hypersurface of degree at
  least~$3$ and $\Pc_X\subset\PP^*(\T_{\PP^n})$ is its conormal
  variety, then the proper image
  $C=\vartheta (\Pc_X)\subset\PP^{2n-1}$, where $\vartheta\colon\PP^*(\T_{\PP^n})\to
  \PP^{2n-1}$ is the birational isomorphism
  defined in Proposition~\ref{prop:Bryant}, is an $(n-1)$-dimensional
  subvariety such that $\dim\Osc_x^2C=2n-2$ for general $x\in C$, $C$
  is not contained in a hyperplane, and $C^\vee$ is projectively
  equivalent to $C$.
\end{prop}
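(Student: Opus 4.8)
The plan is to verify the four assertions by combining the contact-geometric results already established with the projection description of $\vartheta$. The key idea is that $\Pc_X$ is Legendrian for the canonical contact structure on $\PP^*(\T_{\PP^n})$, and $\vartheta$ transports contact structures (Proposition~\ref{prop:Bryant}), so $C=\vartheta(\Pc_X)$ is Legendrian in $\PP^{2n-1}$ for the contact structure coming from $B$. Then Proposition~\ref{cor:leg=>self-dual} will deliver osculating self-duality at once, provided I can check its two hypotheses: that $\dim\Osc_x^2C=2n-2$ for general $x\in C$ and that $C$ is not contained in a hyperplane. The dimension $\dim C=n-1$ is immediate since $\Pc_X$ has dimension $n-1$ (the conormal variety of a hypersurface in $\PP^n$ is always $(n-1)$-dimensional, being birational to $X$ itself) and $\vartheta$ is a birational isomorphism.

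First I would record that $\Pc_X\cap V\neq\varnothing$, so that $\vartheta$ is actually defined on a dense open subset of $\Pc_X$: since $X$ is a general hypersurface of degree $\ge 3$ with $n\ge 3$, it is irreducible, not a hyperplane, and not a cone, so Lemma~\ref{P_X:non-deg} applies. This ensures $C$ is a genuine $(n-1)$-dimensional subvariety and not contained in the indeterminacy locus. Next, by Proposition~\ref{Leg=>osc} applied to the Legendrian variety $C$, one automatically gets $\Osc_x^2C\subset x^\perp$, so $\dim\Osc_x^2C\le 2n-2$ for general $x$; the self-duality will follow the moment I establish that equality holds and that $C$ spans $\PP^{2n-1}$.

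The main obstacle, and the heart of the argument, is the generic osculating dimension count: I must show $\dim\Osc_x^2C=2n-2$ rather than something smaller. Equivalently, using $\dim\Phi_x^2(C)=\dim\Osc_x^2C-\dim C$, I need $\dim\Phi_x^2(C)=n-1$, i.e.\ the second fundamental form is as large as the Legendrian constraint permits. Here I would exploit genericity of $X$: the condition that the general second osculating space drops below $2n-2$ is a closed condition on the hypersurface, so it suffices to exhibit one hypersurface of each degree $\ge 3$ in each $\PP^n$ (or a degenerating family) for which the osculating space has full Legendrian dimension, and then invoke upper-semicontinuity. Concretely I would parametrize $C$ locally via the coordinates of Proposition~\ref{prop:Bryant}: on $V_1$ the conormal variety is a graph $x_1=\mathrm{(function)}$, $\xi_j=\partial_{x_j}(\cdot)$ coming from a defining equation of $X$, and $\vartheta$ is explicit, so the $v$ and its first and second derivatives can be written down and the rank of the span computed for a generic jet of $X$.

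I must also confirm the spanning assertion, that $C$ is not contained in a hyperplane. Since $\Osc_x^2C\subset x^\perp$ is a genuine hyperplane varying with $x$, if $C$ lay in a fixed hyperplane $H$ then $H$ would contain all these osculating spaces, forcing $\Osc_x^2C\subset H\cap x^\perp$; chasing this through the birational correspondence $\vartheta$ and using that $\Pc_X$ spans $\PP^*(\T_{\PP^n})$ (a hyperplane section of a Segre variety) for a non-degenerate $X$ should produce a contradiction. Alternatively, and more cleanly, were $C$ contained in a hyperplane then by Proposition~\ref{degenerate=>cone} it would be a cone, whose conormal variety structure is incompatible with $\Pc_X$ for a general non-ruled $X$; I would argue this non-cone property again by semicontinuity from a single well-chosen generic hypersurface. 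Assembling these pieces, Proposition~\ref{cor:leg=>self-dual} finishes the proof.
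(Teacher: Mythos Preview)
Your plan matches the paper's proof almost exactly: $C=\vartheta(\Pc_X)$ is Legendrian because $\vartheta$ carries contact structures, and then Proposition~\ref{cor:leg=>self-dual} reduces everything to the two checks $\dim\Osc_x^2C=2n-2$ and $C$ not in a hyperplane. For the osculating-dimension check the paper does precisely what you propose---semicontinuity plus an explicit example---using the hypersurface $x_0^{d-1}x_1+x_2^d+\dots+x_n^d=0$, for which the parametrization~\eqref{eq:Pc_X} makes the Hessians visibly independent.

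The one place your outline diverges is the non-hyperplane assertion. The paper does invoke Proposition~\ref{degenerate=>cone}, but the contradiction it extracts is not ``the cone structure is incompatible with $\Pc_X$ for non-ruled $X$'' (which would require a separate argument relating the cone property of $C$ back through the birational map $\vartheta$). Instead the paper bootstraps from the osculating computation already done: if $C$ lay in a hyperplane it would be a cone over a Legendrian $C_1\subset\PP^{2n-3}$; a cone and its base have second fundamental forms of the same dimension, while Proposition~\ref{Leg=>osc} applied to $C_1$ in $\PP^{2n-3}$ gives $\dim\Phi^2_x(C_1)\le (2n-4)-(n-2)=n-2$, contradicting $\dim\Phi^2_x(C)=n-1$. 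So the spanning statement comes for free once the osculating dimension is established, with no extra example or semicontinuity needed. Your alternative routes would work but are longer; the paper's trick is worth noting.
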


\begin{proof}
Since we may assume that $X$ is not a cone, Lemma~\ref{P_X:non-deg}
shows that the image $C=\vartheta (\Pc_X)$ is well defined.  Since the
birational isomorphism $\vartheta$ agrees with the contact structures, the
variety $C$ is Legendrian with respect to a contact structure on
$\PP^{2n-1}$. Now Proposition~\ref{cor:leg=>self-dual} shows that to
prove the proposition it suffices to check that, for general $X$, the
variety $C$ is not contained in a hyperplane and $\dim\Osc_x^2C=2n-2$
for general $x\in C$.

Since dimensions of osculating spaces are lower semicontinuous and
dimension of the second osculating space to a Legendrian subvariety in
$\PP^{2n-1}$ is at most $2n-2$ (Proposition~\ref{Leg=>osc}), the
second assertion will follow once we have, for each $d\ge3$, an
example of a hypersurface $X\subset\PP^n$ of degree $d$ such that
the general osculating space to $\vartheta(\Pc_X)$ has dimension~$2n-2$. Let
us look for such examples among hypersurfaces with the equation
$x_0^{d-1}x_1+F(\lst[2]xn)=0$, where $(x_0:\ldots:x_n)$ are
homogeneous coordinates in $\PP^n$ and $F$ is a homogeneous polynomial
of degree~$d$. On the affine open subset $\{x_0=1\}$ this hypersurface
has equation $x_1+F(\lst[2] xn)=0$. 

It follows from Proposition~\ref{prop:Bryant} that
$\vartheta (\Pc_X)$ is closure of the set of points
$(1:z_1:\ldots:z_{2n-1})$, where
\begin{equation}\label{eq:Pc_X}
\begin{aligned}
z_1&=\frac{2-d}2F(\lst[2]xn),\\
z_{2k-2}&=x_k,\quad 2\le k\le n,\\
z_{2k-1}&=-\frac12\cdot\frac{\partial F}{\partial
  x_k},\quad 2\le k\le n
\end{aligned}
\end{equation}
(\lst[2]xn are arbitrary).

The second fundamental form is spanned by Hessians of right-hand sides
of~\eqref{eq:Pc_X}, where by Hessian of a function $\ph$ depending on
the variables \lst[2]xn we mean the quadratic form
$\sum_{i,j}(\partial^2\ph/\partial x_i\partial x_j)t_it_j$. Now if we
put $F=x_2^d+\ldots+x_n^d$, then it is easy to check that these
Hessians span an $(n-1)$-dimensional space, as required.

It remains to check that for the general $X$ the variety $C$ is not
contained in a hyperplane. To that end, we invoke
Proposition~\ref{degenerate=>cone}. According to this proposition, if
$C$ is contained in a hyperplane, then $C$ is a cone over
$C_1\subset\PP^{2n-3}\subset\PP^{2n-1}$, where $C_1$ is Legendrian
with respect to a contact structure on~$\PP^{2n-3}$. Since $C$ is a
cone over $C_1$, dimensions of the second fundamental form at the general
point are the same for $C$ and $C_1$; since $C_1$ is Legendrian in
$\PP^{2n-3}$, $\dim\Osc^2_xC_1\leqslant n-2$ by
Proposition~\ref{Leg=>osc}, so $\dim\Osc^2_xC\leqslant n-2$ as well,
but we know that this is not the case for $C=\vartheta (X)$ for
general~$X$. This contradiction completes the proof.
\end{proof}

In the case of curves in $\PP^3$ one can say a bit more.

\begin{prop}\label{constr_curves}
Suppose that $X\subset\PP^2$ is an irreducible projective curve of
degree greater than one. Then there exists an osculating self-dual curve
$C\subset\PP^3$ such that $C$ is birational to $X$ and $\deg C=\deg
X+\deg X^*$, where $X^*\subset(\PP^2)^*$ is the dual curve.
\end{prop}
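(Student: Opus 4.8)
The plan is to feed the conormal variety $\Pc_X$ into the construction of Proposition~\ref{prop:constr_of_selfdual} and then to extract the degree of the resulting curve by reading $\vartheta$ as a linear projection. First I would form the conormal variety $\Pc_X\subset\PP^*(\T_{\PP^2})$, which is Legendrian for the canonical contact structure. Because $X$ is irreducible of degree $>1$ it is not a line, so by the case $n=2$ of Lemma~\ref{P_X:non-deg} the curve $\Pc_X$ meets the open set $V$ of Proposition~\ref{prop:Bryant}; hence $C:=\vartheta(\Pc_X)\subset\PP^3$ is well defined and $\vartheta$ restricts to a birational map $\Pc_X\dashrightarrow C$. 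The two projections of the incidence variety identify $\Pc_X$ birationally with $X$ and with the dual curve $X^*$, so $C$ is birational to $X$. Since $\vartheta$ agrees with the contact structures (Proposition~\ref{prop:Bryant}), the Legendrian curve $\Pc_X$ maps to a Legendrian curve $C\subset\PP^3$.

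The hard part will be the degree bookkeeping. I would embed $\PP^*(\T_{\PP^2})$ in $\PP^7$ as a hyperplane section of the Segre variety $\PP^2\times(\PP^2)^*\subset\PP^8$, so that $\Oo_{\PP^7}(1)$ restricts to $h_1+h_2$, with $h_1,h_2$ pulled back from the two factors. As $\Pc_X\to X$ and $\Pc_X\to X^*$ are birational, $\Pc_X\cdot h_1=\deg X$ and $\Pc_X\cdot h_2=\deg X^*$, whence $\deg\Pc_X=\deg X+\deg X^*$ in $\PP^7$. By Proposition~\ref{prop:beta^(-1)}, $\vartheta$ is the restriction of a linear projection $\pi_L\colon\PP^7\dashrightarrow\PP^3$ with centre $L$; for an irreducible curve $Z\not\subset L$ on which $\pi_L$ is birational one has the elementary relation $\deg\pi_L(Z)=\deg Z-\operatorname{length}(Z\cap L)$. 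So everything comes down to arranging $\Pc_X\cap L=\varnothing$.

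To do this I would exploit the freedom to move $X$ by projective automorphisms. The locus $\Gamma=L\cap\PP^*(\T_{\PP^2})$ is a fixed curve inside the incidence threefold, which is a homogeneous space under $\mathrm{PGL}_3$ (acting on $\PP^2$ and contragrediently on $(\PP^2)^*$), and under this action $g\cdot\Pc_X=\Pc_{g(X)}$. Kleiman transversality then guarantees that for general $g$ the translate $g\cdot\Pc_X$ meets $\Gamma$ properly, hence (as $1+1<3$) not at all. Replacing $X$ by such a $g(X)$ changes neither the birational type of $X$ nor the number $\deg X+\deg X^*$, so I may assume $\Pc_X\cap L=\varnothing$; then $\deg C=\deg X+\deg X^*$.

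Finally I would deduce osculating self-duality via Proposition~\ref{cor:leg=>self-dual}. Since $\deg C=\deg X+\deg X^*>1$, the curve $C$ is not a line, so by Corollary~\ref{deg=>line} it cannot lie in a plane; and for a space curve not contained in a hyperplane the general second osculating space is two-dimensional, i.e.\ $\dim\Osc_p^2C=2=2\cdot2-2$. As $C$ is Legendrian, not planar, and has the expected osculating dimension, Proposition~\ref{cor:leg=>self-dual} shows that $C$ is osculating self-dual, which together with the birationality and the degree formula proves the statement. I expect the only real obstacle to be the control of $\Pc_X\cap L$: verifying that the projection $\vartheta$ preserves the degree is exactly the generic-position step above, and without it the formula $\deg C=\deg X+\deg X^*$ would have to be corrected by the intersection length.
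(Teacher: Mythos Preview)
Your argument is correct and follows the same route as the paper: form $C=\vartheta(\Pc_X)$, view $\vartheta$ as a linear projection from a center $L\subset\PP^7$, move $X$ projectively so that $\Pc_X\cap L=\varnothing$, and read off $\deg C=\deg\Pc_X=\deg X+\deg X^*$ from the bidegree on the Segre embedding. The only cosmetic difference is in the generic-position step: the paper computes $L\cap\PP^*(\T_{\PP^2})$ explicitly as two lines and notes that $\Pc_X$ misses it exactly when $p_0\notin X$ and $H_0$ is not tangent to $X$, whereas you invoke Kleiman transversality on the homogeneous flag variety; your final use of Corollary~\ref{deg=>line} (via $\deg C>1$) to rule out $C$ being planar is, if anything, slightly tidier than the paper's treatment, which leaves that verification implicit.
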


\begin{proof}
Arguing as in the proof of Proposition~\ref{prop:constr_of_selfdual},
if $\Pc_X\subset\PP^*(\T_{\PP^2})$ is the conormal variety of~$X$,
then $C=\vartheta (\Pc_X)\subset\PP^3$ is Legendrian, hence self-dual,
if $C$ is not contained in a plane and $\Pc_X\cap W\ne\varnothing$
(since $\dim\Osc_p^2C=2$ automatically for general $p\in C$ if
$C\subset \PP^3$ is a curve that is not contained in a plane). Since
$X\subset\PP^2$ is not a line, it is not a cone, so
Lemma~\ref{P_X:non-deg} ensures that $\Pc_X\cap W\ne\varnothing$ and
$C=\vartheta (\Pc_X)$ is well defined. To be able to control $\deg C$,
recall that, according to Proposition~\ref{prop:beta^(-1)}, the
rational mapping $\vartheta $ is induced by the projection
$\pi_L\colon \PP^7\dasharrow\PP^3$, where $L\subset\PP^7$ is a
$3$-dimensional linear space such that
\[
L\cap \PP^*(\T_{\PP^3})=\{(x,H)\in\PP^2\times(\PP^2)^*\colon
x\in H_0,\ H\ni x_0,\ x\in H\};
\]
here, $H_0\subset\PP^2$ is a line and $p_0\in H_0$ is a point. So,
this intersection is the union of two lines, $\{(x,H_0)\colon x\in
H_0\}$ and $\{(x_0,H)\colon H\ni x_0\}$; these lines intersect at the
point $(x_0,H_0)$.  It is clear that $L\cap\Pc_X$ is the set of
couples $(x,H)$ such that either $H=H_0$ is tangent to $X$ at the
point $x$ or $H$ is tangent to $X$ at the point~$x=x_0$ (if $x\in X$
is singular, we say that a line~$H$ is tangent to $C$ at $X$ if it is
a limit of tangents at smooth points tending to~$x$). Thus, if $X$ is
not tangent to $H_0$ (i.e., $H_0\notin X^*$) and $x_0\notin X$, then
$\Pc_X\cap L=\varnothing$
since the restriction of the projection $\pi_L=\vartheta $ to the subset
$V=\{((x_0:x_1:x_2),(y_0:y_1:y_2))\in\PP^*(\T_{\PP^2})\colon
x_0\ne1,\ y_1\ne 1\}$
is
an isomorphism onto its image (see Proposition~\ref{prop:Bryant}),
this implies that $\deg C=\deg \Pc_X$, where in the right-hand side we
regard $\Pc_X$ as a curve in the $\PP^7$ in which $\PP^*(\T_{\PP^2})$
is embedded. Denoting the projections of $\PP^*(\T_{\PP^2})$ on
$\PP^2$ and $(\PP^2)^*$ by $\pr_1$ and $\pr_2$ respectively, one sees
that
$\Oo_{\PP^*(\T_{\PP^2})}(1)=\pr_1*\Oo_{\PP^2}(1)\otimes\pr_2^*\Oo_{(\PP^2)^*}(1)$.
So, if $L\cap\Pc_X=\varnothing$, then 
\[
\deg C=\deg\Oo_{\PP^*(\T_{\PP^2})}(1)|_{\Pc_X}=\deg X+\deg X^*.
\]
It remains to observe that for any curve $X\subset\PP^2$ there exists
a curve $X_1\subset\PP^2$ such that $X_1$ is projectively equivalent
to $X$ and $X_1$ is not tangent to $H_0$ and does not pass through
$p_0$. Putting $C=\vartheta (\Pc_{X_0})$ one obtains the required self-dual curve.
\end{proof}

\begin{prop}\label{prop:smooth_curve}
Any smooth projective curve is isomorphic to a curve in $\PP^3$ that
is Legendrian with respect to a contact structure. 

In particular, any smooth projective curve can be embedded in $\PP^3$
as an osculating self-dual curve. 
\end{prop}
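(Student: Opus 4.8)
The plan is to realise the given curve as the conormal variety $\Pc_X$ of a suitably immersed plane curve $X$ and then to transport it into $\PP^3$ by $\vartheta$, the essential new point (beyond Proposition~\ref{constr_curves}, which only yields a birational morphism) being that $\vartheta|_{\Pc_X}$ can be made a closed embedding. So let $\Gamma$ be a smooth projective curve. First I would manufacture an immersion $\nu\colon\Gamma\to\PP^2$: embed $\Gamma$ as a smooth curve in $\PP^3$ (possible for every smooth projective curve, with degree $\ge 2$) and project from a general point $q$. Since the tangent lines of $\Gamma\subset\PP^3$ sweep out only the two-dimensional tangent developable, a general $q$ lies on none of them and $\nu$ is an immersion; since a general $q$ avoids all trisecants and tangent-secant lines, the image $X=\nu(\Gamma)$ is an irreducible nodal curve of degree $\ge 2$ whose branches at each node have distinct tangents. (For $\Gamma=\PP^1$ one may simply take $X$ to be a smooth conic.)

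I claim $\Pc_X\cong\Gamma$. Because $\nu$ is an immersion, the branch tangent $T_t$ to $X$ at $\nu(t)$ is defined for every $t\in\Gamma$, so $\tilde\nu\colon\Gamma\to\PP^*(\T_{\PP^2})$, $t\mapsto(\nu(t),T_t)$, is a morphism, and it is an immersion for the same reason. It is injective: two points of $\Gamma$ with equal image in $\PP^2$ lie over a node, where the tangents $T_t$ differ. Hence $\tilde\nu$ is a closed embedding onto $\Pc_X$, so $\Pc_X\cong\Gamma$ is smooth.

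The heart of the matter is to keep this isomorphism after applying $\vartheta$. By Proposition~\ref{prop:beta^(-1)}, $\vartheta=\pi_L$ is a projection whose centre meets $\PP^*(\T_{\PP^2})$ in the two lines $\{(x,H_0)\}$ and $\{(p_0,H)\}$; after replacing $X$ by a projective equivalent I may assume $p_0\notin X$, that $H_0$ is not tangent to $X$ and meets $X$ transversally at smooth points only, and that the tangent lines to $X$ from $p_0$ are simple and pairwise distinct. The first two conditions give $\Pc_X\cap L=\varnothing$, so $\vartheta|_{\Pc_X}$ is finite, and Lemma~\ref{P_X:non-deg} makes it birational onto $C=\vartheta(\Pc_X)$. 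To see that it is in fact an embedding I would read off from~\eqref{eq:beta^(-1)} the behaviour on $\Pc_X\setminus V$: the stratum $\{x_0=0\}$ is sent into the line $\ell_1=\{z_0=z_3=0\}$ by $(0:x_1:x_2)\mapsto(0:\tfrac12 x_1:x_2:0)$, depending only on $\nu(t)$, while the stratum $\{y_1=0\}$ (the tangents through $p_0$) is sent into $\ell_2=\{z_0=z_2=0\}$, depending only on $T_t$. The genericity of $p_0,H_0$ relative to $X$ makes each of these restrictions injective and forbids collisions between the strata (points of $\Pc_X\cap V$ have $z_0\ne 0$, and $\ell_1\cap\ell_2$ is avoided precisely because $p_0\notin X$ and $H_0\notin X^*$); the same genericity shows $\vartheta|_{\Pc_X}$ is immersive at the boundary, since a failure would force $H_0$ to be tangent to $X$ or a tangent from $p_0$ to be non-simple. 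Thus $\vartheta|_{\Pc_X}$ is an injective finite immersion, hence a closed embedding, and $C\cong\Gamma$.

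It remains to assemble the conclusion. The curve $C$ is Legendrian because $\vartheta$ agrees with the contact structures, and $\deg C=\deg X+\deg X^*>1$, so $C$ is not a line; by Corollary~\ref{deg=>line} it is therefore not contained in a plane, and Proposition~\ref{cor:leg=>self-dual} then gives that $C$ is osculating self-dual. I expect the main obstacle to be precisely the embedding step of the third paragraph: Proposition~\ref{constr_curves} controls $\vartheta$ only on the open set $V$, and the new work is the explicit analysis on the two boundary strata $\{x_0=0\}$ and $\{y_1=0\}$, where the fixed centre of projection must be kept in general position with respect to $X$ to prevent $\vartheta$ from identifying points or dropping rank.
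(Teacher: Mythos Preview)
Your approach coincides with the paper's: project to a nodal plane curve, identify $\Gamma\cong\Pc_X$, move $X$ into general position, and then prove that $\vartheta|_{\Pc_X}$ is a closed embedding by analysing separately the strata $\{x_0=0\}$ and $\{y_1=0\}$ of $\Pc_X\setminus V$. The paper carries out that last step via two explicit lemmas (one for injectivity, one for immersivity) with coordinate computations; your sketch compresses these into a paragraph, but the genericity conditions you impose and the failure modes you name (``$H_0$ tangent to $X$'' on the first stratum, ``tangent from $p_0$ non-simple'' on the second) are exactly what those computations yield, so the argument is correct once the calculations are written out.
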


We begin the proof with two lemmas. In these lemmas we assume that
homogeneous coordinates on $\PP^3$ are $(z_0:z_1:z_2:z_3)$,
homogeneous coordinates on $\PP^2$ are $(x_0:x_1:x_2)$ and dual
homogeneous coordinates on $(\PP^2)^*$ are $(y_0:y_1:y_2)$. By tangent
line to a plane nodal curve~$X$ we mean a line that is tangent either
to $C$ at a smooth point or to a branch of $C$ at a node. By
$\vartheta\colon \PP^*(\T_{\PP^2})\subset\PP^3$ we mean the birational
morphism defined in Proposition~\ref{prop:Bryant}.

\begin{lemma}\label{lemmaA}
Suppose that $X\subset\PP^2$ is a nodal curve with the following
properties.

\textup{(1)} $X$ intersects transversally the line $\{x_0=0\}$.

\textup{(2)} The lines tangent to $X$ at its intersection points with the
line $\{x_0=0\}$, do not pass through the point $(0:1:0)$.

\textup{(3)} The lines tangent to $X$ at inflection points \textup(including
inflection points of branches at nodes, if such nodes exist\textup) do not pass
through the point $(0:1:0)$.

\textup{(4)} The curve $X$ does not pass through the point $(0:0:1)$.

Then the conormal variety $\Pc_X\subset\PP^*\T_{\PP^3}$ is smooth,
lies in the open set where the birational isomorphism $\vartheta$ is
defined, and the mapping $\vartheta |_{\Pc_X}\colon \Pc_X\to
\PP^3$ is an immersion.
\end{lemma}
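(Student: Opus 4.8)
The plan is to verify the three claimed properties of the map $\vartheta|_{\Pc_X}$—that $\Pc_X$ is smooth, that it avoids the indeterminacy locus $L\cap\PP^*(\T_{\PP^2})$, and that $\vartheta|_{\Pc_X}$ is an immersion—by reducing each to an explicit local computation in affine coordinates, using conditions (1)–(4) to rule out the bad behavior.

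\begin{proof}[Proof sketch]
First I would record that $\Pc_X$ is smooth. For a nodal curve $X\subset\PP^2$, the conormal variety $\Pc_X\subset\PP^*(\T_{\PP^2})$ is the closure of the incidence set of smooth points together with their tangent lines; at a node two branches pass through, and $\Pc_X$ records each branch separately, so $\Pc_X$ is in fact the conormal variety of the normalization $\widetilde X$, which is smooth. This is a standard fact about conormal varieties of nodal curves, so no assumption beyond nodality is needed for smoothness. Next I would check that $\Pc_X$ meets the open set where $\vartheta$ is defined, i.e.\ that $\Pc_X\cap L=\varnothing$. By Proposition~\ref{prop:beta^(-1)} (in the case $n=2$), the indeterminacy locus is
\[
L\cap\PP^*(\T_{\PP^2})=\{(x,H)\colon x\in H_0,\ H\ni p_0,\ x\in H\},
\]
the union of the two lines $\{(x,H_0)\colon x\in H_0\}$ and $\{(p_0,H)\colon H\ni p_0\}$, where $H_0=\{x_0=0\}$ and $p_0=(0:1:0)$. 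A point $(x,H)\in\Pc_X$ lies in the first line exactly when $x\in X\cap H_0$ and the tangent line there is $H_0$; condition~(1), which says $X$ meets $H_0$ transversally, forbids this. It lies in the second line exactly when the tangent line at some branch passes through $p_0=(0:1:0)$ \emph{and} $x=p_0$, i.e.\ when $p_0\in X$ with tangent through $p_0$; I would rule this out using conditions (2) and (3) together with (4), which control which tangent lines are allowed to pass through $p_0$ and keep $X$ off the coordinate point $(0:0:1)$. The upshot is that $\Pc_X$ lies entirely in $V$, so $\vartheta|_{\Pc_X}$ is everywhere defined.

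For the immersion claim I would work locally on $\Pc_X$. Since $\vartheta$ restricts to an isomorphism from $V$ onto $W$ (Proposition~\ref{prop:Bryant}), on the part of $\Pc_X$ lying in $V$ the map $\vartheta|_{\Pc_X}$ is automatically an immersion, being the restriction of an isomorphism to a smooth subvariety. Thus, once the previous step shows $\Pc_X\subset V$, the immersion property is immediate. The only genuine content is therefore the \emph{verification}, branch by branch, that the conditions (1)–(4) are exactly what is needed to place $\Pc_X$ inside $V$; I would carry this out by parametrizing each branch of $X$ analytically as $t\mapsto(x_1(t),x_2(t))$ in the chart $x_0=1$, computing the tangent line $H(t)$ and hence the dual coordinates $(y_0(t):y_1(t):y_2(t))$, and checking where $x_0=0$ or $y_1=0$ can occur. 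The coordinate $y_1$ vanishes precisely when the tangent line passes through $p_0=(0:1:0)$, and $x_0$ vanishes precisely on $H_0$; conditions (1)–(4) translate directly into the non-vanishing of these coordinates along all of $\Pc_X$.

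The main obstacle I anticipate is the careful bookkeeping at the nodes and at the inflection points, where one must treat each branch separately and make sure that the inflectional tangents (which are the loci where the dual map $\pr_2|_{\Pc_X}$ ramifies) do not pass through $p_0$—this is exactly what condition~(3) is designed to handle. In particular, inflection points are where the tangent line has higher-order contact, and the parenthetical remark in (3) about inflection points of branches at nodes signals that these must be included in the analysis; overlooking them would leave a gap. Apart from this, the argument is a direct translation of the four hypotheses into non-vanishing statements for the two coordinate functions $x_0$ and $y_1$ on $\Pc_X$, followed by the soft observation that $\vartheta$ is an isomorphism on $V$.
\end{proof}
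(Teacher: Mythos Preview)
Your argument has a genuine gap in the immersion step. You claim that conditions (1)--(4) force $\Pc_X\subset V=\{x_0\ne 0,\ y_1\ne 0\}$, after which immersion would follow trivially from the fact that $\vartheta|_V$ is an isomorphism. But this is false: the hypotheses do \emph{not} keep $\Pc_X$ inside $V$. Condition~(1) says $X$ meets the line $\{x_0=0\}$ transversally, not that $X$ avoids it; so $\Pc_X$ will typically contain points with $x_0=0$. Likewise, condition~(3) forbids tangents through $(0:1:0)$ only at inflection points, not everywhere; a generic point $(0:1:0)$ lies on finitely many tangent lines to $X$, producing points of $\Pc_X$ with $y_1=0$. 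What hypothesis~(2) buys is only that $x_0$ and $y_1$ cannot vanish \emph{simultaneously} on $\Pc_X$, so $\vartheta$ is defined on $\Pc_X$; it does not place $\Pc_X$ in the locus where $\vartheta$ is an isomorphism.

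The actual content of the lemma, which the paper carries out and you have skipped, is a direct local computation at those boundary points. One parametrizes a branch of $X$ analytically, writes down $\vartheta$ applied to the lift to $\Pc_X$, and checks when the resulting map $v(t)$ fails to be an immersion (i.e.\ when $v(t)$ and $v'(t)$ are proportional). In the case $x_0\ne0$, $y_1=0$, this failure forces the point to be an inflection with tangent through $(0:1:0)$, excluded by~(3); in the case $x_0=0$, $y_1\ne0$, it forces either non-transversality with $\{x_0=0\}$ or passage through $(0:0:1)$, excluded by~(1) and~(4). These computations are the heart of the proof and explain exactly why each of (1), (3), and (4) is needed; your sketch does not reach them because it rests on the incorrect premise that $\Pc_X\subset V$.
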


\begin{proof}
Since $X$ is nodal, $\Pc_X$ is smooth, so we are only to check that
$\vartheta $ is defined on $\Pc_X$ and that the derivative of the
restriction $\vartheta|_{\Pc_X}$ does not vanish.

The first assertion follows from Proposition~\ref{prop:beta^(-1)} and
hypothesis~(2). 

To check that $\vartheta|_{\Pc_X}$ is an immersion observe that the
restriction of $\vartheta$ to the subset $V=\{((x_0:x_1:x_2),
(y_0:y_1:y_2))\in \PP^*(\T_{\PP^2})\colon x_0\ne 0,\ y_0\ne 0\}$ is an
isomorphism onto its image. Thus, it suffices to check that
$\vartheta|_{\Pc_X}$ is an immersion at the points of $\Pc_X$ for
which either $x_0=0$ or $y_1=0$ (these coordinates cannot both vanish
for a point of $\Pc_X$ because of hypothesis~(2)).

\emph{Case 1. Points of $\Pc_X$ for which $x_0\ne 0$, $y_1=0$.}  If
$x_0\ne0$ and $y_1=0$ for a point $(p,H)\in\Pc_X$, then the curve $X$
(which is smooth at $p$ thanks to hypothesis~(1)) can be analytically
parametrized near $p$ by the formula $\gamma\colon t\mapsto
(1:a(t):b(t))$, where $\gamma(0)=p$ and $b'(0)=0$. Since the tangent
line at $p(t)$ has equation
\[
(a'(t)b(t)-b'(t)a(t))x_0+b'(t)x_1-a'(t)x_2=0,
\]
the curve $\Pc_X$ can be parametrized near $(p,H)$ as
\[
t\mapsto((1:a(t):b(t)),(a'(t)b(t)-b'(t)a(t):b'(t):-a'(t));
\]
using formula~\eqref{eq:beta^(-1)} we see that the curve
$\vartheta(\Pc_X)$ near the point $(p,H)$ can be parametrized by
\[
v\colon t\mapsto (b'(t):(2a(t)b'(t)-a'(t)b(t))/2:
b(t)b'(t):a'(t)/2).
\]
The mapping $v$ is not an immersion exactly at the points where $v$ and $v'$
are proportional; taking into account the equation $b'(0)=0$, one has
\begin{equation}\label{eq:v,v':case1}
v'(0)=(b''(0):*:*:*),\ v(0)=(0:*:*:a'(0)/2),
\end{equation}
where stars stand for irrelevant terms. It follows
from~\eqref{eq:v,v':case1} that if
$v(0)$ and $v'(0)$ are
proportional then $a'(0)b''(0)=0$. However, $a'(0)\ne 0$ since
$b'(0)=0$. Thus, $b''(0)=0$, so $p$ is an inflexion point of the curve
$X$, and the tangent to $X$ at $p$ passes through $(0:1:0)$ since
$b'(0)=0$, which contradicts hypothesis~(3). Thus,
$\vartheta|_{\Pc_X}$ is an immersion at~$(p,H)$.

\emph{Case 2. Points of $\Pc_X$ for which $x_0=0$, $y_1\ne0$.}  If
$x_0\ne0$ and $y_1=0$ for a point $(p,H)\in\Pc_X$, then $x_1\ne 0$ for
the point $p$ thanks to hypothesis~(4). So, near the point~$p$ the
curve $X$ (or, if $p$ is a node, the branch to which $H$ is tangent)
can be parametrized as $t\mapsto (a(t):1:b(t))$, where $a(0)=0$.  A
computation similar to what we did in Case~1 shows that $\Pc_X$ near
the point $(p,H)$ can be parametrized as
\[
t\mapsto((a(t):1:b(t)),(b'(t):a(t)'b(t)-a(t)b'(t):-a'(t)))
\]
and the curve $\vartheta(\Pc_X)$ near the point $(p,H)$ can be
parametrized as
\begin{multline*}
v\colon t\mapsto
(a(t)a'(t)b(t)-(a(t))^2b'(t):(a'(t)b(t)-2a(t)b'(t))/2:{}\\
a'(t)b(t)^2-a(t)b(t)b'(t):a(t)a'(t)/2),
\end{multline*}
Again $v$ fails to be immersion where $v$ and $v'$ are
proportional. Taking into account the equation $a(0)=0$, one has
\[
v(0)=(0:a'(0)b(0)/2:*:*),\
v'(0)=((a'(0))^2b(0):*:*:*),
\]
so if $v(0)$ and $v'(0)$ are proportional then
$(a'(0))^3(b(0))^2=0$. However, $a'(0)\ne 0$ since $X$ is transversal
to the line $\{x_0=0\}$ (hypothesis~(1)). Hence, $b(0)=a(0)=0$ and the
curve $X$ passes through the point~$(0:0:1)$. This contradicts
hypothesis~(4).
\end{proof}

\begin{lemma}\label{lemmaB}
Suppose that $X\subset\PP^2$ is a nodal curve with the following
properties.

\textup{(1)} $X$ intersects transversally the line $\{x_0=0\}$.

\textup{(2)} Tangent lines to $X$ at its intersection points with the
line $\{x_0=0\}$, do not pass through the point $(0:1:0)$.

\textup{(3)} No bitangent to $X$ passes through the point~$(0:1:0)$.

Then the conormal variety $\Pc_X\subset\PP^*\T_{\PP^3}$ is smooth,
lies in the open set where the birational isomorphism
$\vartheta (\Pc_X)\subset\PP^3$ is defined, and the restriction
$\vartheta |_{\Pc_X}\colon \Pc_X\to \PP^3$ is one-to-one onto its image.
\end{lemma}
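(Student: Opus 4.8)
The plan is to mimic the proof of Lemma~\ref{lemmaA}, upgrading its local conclusion (immersion) to the global one (injectivity). Smoothness of $\Pc_X$ is automatic because $X$ is nodal, so the real content is that $\vartheta$ is defined along $\Pc_X$ and is one-to-one. For definedness I would argue, exactly as in Lemma~\ref{lemmaA}, that $\Pc_X$ misses the centre of the projection $\vartheta$. By Proposition~\ref{prop:beta^(-1)} this centre is $L\cap\PP^*(\T_{\PP^2})=\ell_1\cup\ell_2$, where $\ell_1=\{(x,H_0)\colon x\in H_0\}$ and $\ell_2=\{(p_0,H)\colon p_0\in H\}$, with $H_0=\{x_0=0\}$ and $p_0=(0:1:0)$. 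Hypothesis~(1) says $H_0$ is not tangent to $X$ at any point of $X\cap H_0$, hence $\Pc_X\cap\ell_1=\varnothing$; and hypothesis~(2) forces $p_0\notin X$ (were $p_0\in X$, it would be a point of $X\cap H_0$ whose tangent line contains $p_0=(0:1:0)$), hence $\Pc_X\cap\ell_2=\varnothing$ as well.

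For injectivity I would use that, by Proposition~\ref{prop:Bryant}, $\vartheta$ restricts to an isomorphism from $V=\{x_0\ne0,\ y_1\ne0\}$ onto $W=\{z_0\ne0\}$. On $\Pc_X$ the coordinate $z_0=x_0y_1$ vanishes precisely on $\{x_0=0\}\cup\{y_1=0\}$, and these two loci are disjoint on $\Pc_X$ (their intersection is contained in the centre just excluded). This splits $\Pc_X$ into three parts: the part contained in $V$, on which $\vartheta$ is injective for free; the set $A_1=\Pc_X\cap\{x_0=0\}$ of conormal points lying over $X\cap H_0$; and the set $A_2=\Pc_X\cap\{y_1=0\}$ of pairs $(p,H)$ whose tangent line $H$ passes through $p_0$. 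Since $\vartheta$ sends $V$ into $W=\{z_0\ne0\}$ and sends $A_1\cup A_2$ into $\{z_0=0\}$, no collision can occur between the $V$-part and the rest.

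It then remains to treat $A_1\cup A_2$ directly from the formulas~\eqref{eq:beta^(-1)} with $n=2$. On $A_1$ one finds $\vartheta(x,H)=(0:x_1:2x_2:0)$, a linear injective function of the point $p=(0:x_1:x_2)\in H_0$, so distinct intersection points of $X$ with $H_0$ give distinct images. On $A_2$ one finds $\vartheta(p,H)=(0:y_0:0:y_2)$, depending only on the line $H=(y_0:0:y_2)$ through $p_0$; this is where hypothesis~(3) enters, and it is the crux of the argument. If two distinct points of $A_2$ shared an image, the corresponding line $H$ would be tangent to $X$ at two distinct branch-points, i.e.\ a bitangent through $p_0$, which is forbidden. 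Finally an $A_1$-image and an $A_2$-image can coincide only if the third coordinate $2x_2$ vanishes, i.e.\ $p=p_0$, again contradicting $p_0\notin X$; thus the images of $A_1$ and $A_2$ are disjoint. Combining the three cases yields injectivity. The main obstacle is the $A_2$ analysis together with the bookkeeping over nodes: over a node the two branches carry two \emph{distinct} tangent lines, so at most one of them passes through $p_0$, and it is precisely this (read into hypothesis~(3)) that prevents two points of $A_2$ lying over a single node from colliding.
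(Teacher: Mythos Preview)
Your proof is correct and follows the same decomposition as the paper: the open piece $\Pc_X\cap V$ (handled by Proposition~\ref{prop:Bryant}), the finite set $A_1=\{x_0=0\}$, and the finite set $A_2=\{y_1=0\}$, with the same formula-based check of injectivity on each piece and disjointness of images. For the $A_1$--$A_2$ separation the paper reads off the \emph{fourth} coordinate instead (obtaining $y_2=0$, hence $H=H_0$, contradicting~(1)); your route via the third coordinate and $p_0\notin X$ works just as well.

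One correction to your last sentence: hypothesis~(3) does \emph{not} say that at most one branch-tangent at a node passes through $p_0$---both could, and nothing forbids it. What actually prevents two points of $A_2$ lying over the same node from colliding is simply that a node has two \emph{distinct} tangent lines $H_1\ne H_2$, and on $A_2$ the image $(0:y_0:0:y_2)$ depends injectively on $H$ alone. So the conclusion stands, but for this simpler reason; hypothesis~(3) is only needed for the genuine bitangent case $x_1\ne x_2$.
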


\begin{proof}
The first two assertions follow from hypotheses~(1) and~(2) as before,
and again $\Pc_X$ is smooth.  To prove injectivity of
$\vartheta|_{\Pc_X}$, represent $\Pc_X$ as disjoint union of the
following three subsets $A$, $B$ and~$C$:
\[
\begin{aligned}
A&=\{((x_0:x_1:x_2),(y_0:y_1:y_2))\in\Pc_X\colon x_0=0\},\\
B&=\{((x_0:x_1:x_2),(y_0:y_1:y_2))\in\Pc_X\colon x_0\ne0,\ y_1=0\},\\
C&=\{((x_0:x_1:x_2),(y_0:y_1:y_2))\in\Pc_X\colon x_0\ne0,\ y_1\ne0\}.
\end{aligned}
\]
Now the required injectivity of $\vartheta |_{\Pc_X}$ is implied by
the following chain of assertions.

1. \emph{$\vartheta $ is injective on $A$}. Indeed, if
$p=((0:x_1:x_2),(y_0:y_1:y_2))\in A$, then
$\vartheta (p)=(0:\frac12x_1:x_2:0)$. Thus, if $p_1,p_2\in A$, then
$\vartheta (p_1)=\vartheta (p_2)$ if an only if $\pi(p_1)=\pi(p_2)$,
where $\pi\colon \Pc_X\to X$ is the natural projection. Since the line
$\{x_0=0\}$ does not pass through the nodes of $X$ (hypothesis~(1)), this
implies that $p_1=p_2$.

2. $\vartheta (A)\cap \vartheta (B\cup C)=\varnothing$. Indeed,
the $z_0$ coordinate is zero for any point in $\vartheta (A)$, and
$z_0$ is non-zero for any point in $\vartheta (C)$,
see~\eqref{eq:beta^(-1)}. Thus, $\vartheta (A)$ is disjoint
with $\vartheta (C)$. If $\vartheta (p)=\vartheta (q)$, where
$p=((0:x_1:x_2),(\cdot:\cdot:\cdot))\in A$ and $q=((\cdot:\cdot:\cdot),
(y_0:0:y_2))\in B$, then 
\[
(0:\frac12x_1:x_2:0)=(0:y_0:0:y_2),
\]
whence $y_2=0$. Thus, the second component of the point $q\in\Pc_X$ is
the line with homogeneous coordinates $(1:0:0)$, i.e., the line with
equation $x_0=0$. This is, however, impossible since this line is not
tangent to $X$, thanks to hypothesis~(1).

3. \emph{$\vartheta $ is injective on $B$}. Indeed, if  
$p=((x_0:x_1:x_2),(y_0:0:y_2))\in B$, then
$\vartheta (p)=(0:y_0:0:y_2)$. So, if the points $p_1=(x_1,H_1)$
and $p_2=(x_2,H_2)$ ($x_i\in X$, $H_i$ is tangent to $X$ at
$x_i$) lie in $B$ and if $\vartheta (p_1)=\vartheta (p_2)$, then
$\ell_1=\ell_2$ and the line $H=H_1=H_2$ passes through the
point $(0:1:0)$ and is tangent to $X$ at two different points $x_1$
and $x_2$; this contradicts hypothesis~(3).

4. $\vartheta (B)\cap \vartheta (C)=\varnothing$. Indeed the
$z_0$ coordinate of $\vartheta (p)$ is zero if $p\in B$ and it is
non-zero if $p\in C$.

5. \emph{$\vartheta$ is injective on $C$}. 
This is implied by Proposition~\ref{prop:Bryant}.
\end{proof}

\begin{proof}[Proof of Proposition~\ref{prop:smooth_curve}]
Any smooth projective curve $C\subset\PP^N$ can be birationally
projected to a nodal plane curve $X\subset\PP^2$; it is clear that
$C$, being the normalization of $X$, is isomorphic to $\Pc_X$. For a
general projective transformation $A\colon \PP^2\to\PP^2$, the curve
$X_1=AX\subset\PP^2$ satisfies the hypotheses of Lemmas~\ref{lemmaA}
and~\ref{lemmaB}, so the curve $\vartheta (\Pc_{X_1})\subset\PP^3$ is
smooth, isomorphic to~$C$, and Legendrian with respect to the contact
structure defined by the formula~\eqref{eq:omega} with $n=2$
\end{proof}

\begin{cor}[from the proof]
If $C\subset\PP^N$ is a smooth projective curve of degree $d$ and
genus $g$, then there exists a Legendrian curve $C'\subset\PP^3$ such
that $C'$ is isomorphic to $C$ and $\deg C'=3d+2g-2$. In particular,
if $C$ is a smooth plane curve of degree~$d$, then $\deg C'=d^2$.
\end{cor}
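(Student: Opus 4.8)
The plan is to extract the degree formula directly from the proof of Proposition~\ref{prop:smooth_curve}, using the two ingredients established there: first, that the birational map $\vartheta$ realizing the Legendrian embedding is an isomorphism $\Pc_X \to C'=\vartheta(\Pc_X)$ (so $C'$ is isomorphic to $C$ and, being cut out away from the center of projection, has the same degree as $\Pc_X$ computed with respect to $\Oo_{\PP^*(\T_{\PP^2})}(1)$), and second, the identity $\Oo_{\PP^*(\T_{\PP^2})}(1)=\pr_1^*\Oo_{\PP^2}(1)\otimes\pr_2^*\Oo_{(\PP^2)^*}(1)$ already recorded in the proof of Proposition~\ref{constr_curves}. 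Combining these, as in that earlier proof, one gets
\[
\deg C' = \deg\bigl(\Oo_{\PP^*(\T_{\PP^2})}(1)|_{\Pc_X}\bigr)
        = \deg X + \deg X^*,
\]
where $X\subset\PP^2$ is the nodal plane model of $C$ and $X^*$ is its dual curve.

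The remaining task is purely a matter of plugging in the standard Plücker-type expressions. First I would note that if $C$ is a smooth curve of degree $d$ and genus $g$ projected birationally to the nodal plane curve $X$, then $\deg X = d$, since a general projection preserves degree. For $\deg X^*$ I would use the classical formula: the degree of the dual curve equals the number of tangent lines passing through a general point, which for the normalization $C\to X$ of genus $g$ and degree $d$ is given by $\deg X^* = 2d + 2g - 2$. This follows from the Riemann--Hurwitz count applied to the projection of $C$ from a general point, or equivalently from the fact that $\deg X^*$ is the degree of the map $C\to(\PP^2)^*$ sending a point to its tangent line, whose class one computes as $2d+2g-2$ (the number of ramification points of a general pencil of lines, i.e.\ the number of tangent lines through a general point).

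Substituting, I obtain
\[
\deg C' = d + (2d + 2g - 2) = 3d + 2g - 2,
\]
which is the asserted formula. For the particular case of a smooth plane curve of degree $d$ one has $g = \binom{d-1}{2} = (d-1)(d-2)/2$ by the genus formula, and a direct substitution gives
\[
\deg C' = 3d + 2\cdot\frac{(d-1)(d-2)}{2} - 2 = 3d + (d^2 - 3d + 2) - 2 = d^2,
\]
as claimed.

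There is no serious obstacle here: the corollary is explicitly labeled ``from the proof,'' so the geometric content---that $\vartheta|_{\Pc_X}$ is an isomorphism onto $C'$ away from the indeterminacy locus and that the hyperplane class on $\PP^*(\T_{\PP^2})$ splits as the sum of the two pullbacks---is already in hand from Lemmas~\ref{lemmaA} and~\ref{lemmaB} and from Proposition~\ref{constr_curves}. The only point requiring a little care is the classical dual-degree formula $\deg X^* = 2d + 2g - 2$; I would make sure to state it for the \emph{normalization} genus $g$ (so that nodes of $X$ are accounted for correctly and no $\delta$-invariant correction is needed beyond what is absorbed into $g$), since $X$ itself is singular. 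With that caveat observed, the computation is entirely routine.
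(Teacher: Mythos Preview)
Your proposal is correct and follows essentially the same route as the paper: invoke from the proof of Proposition~\ref{constr_curves} the identity $\deg C'=\deg X+\deg X^*$, then compute $\deg X^*=2d+2g-2$. The only cosmetic difference is that the paper derives $\deg X^*$ by first counting nodes, $\nu=(d-1)(d-2)/2-g$, and applying the Pl\"ucker formula $\deg X^*=d(d-1)-2\nu$, whereas you appeal directly to Riemann--Hurwitz for the normalization; both yield the same $2d+2g-2$.
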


\begin{proof}
If $\deg C=d$ and genus of $C$ equals $g$, then its general projection
$X\subset\PP^2$ has $\nu=(d-1)(d-2)/2-g$ nodes, whence 
\[
\deg X^*=\deg(X')^*=d(d-1)-2\nu=2d+2g-2.
\]
In the proof of Proposition~\ref{constr_curves} we found out that
$\deg \Pc_{X'}=\deg X'+\deg(X')^*$, whence the formula.
\end{proof}

\section{Concluding remarks}\label{sec:conclusion}

Although Legendrian subvarieties in odd-dimensional projective spaces
abound, there exist osculating self-dual varieties ($k$-dimensional in
$\PP^{2k+1}$) that are not Legendrian with respect to any contact
structure on $\PP^{2k+1}$. 

For $k=1$, i.e., for the case of curves in
$\PP^3$, it is easy to produce a family of examples. 

Recall that a \emph{monomial curve} in $C_{a,b,c}\subset\PP^3$ is the
closure of the set of points with homogeneous coordinates
$(1:t^a:t^b:t^c)$, where $a$, $b$, $c$ are positive integers,
$(a,b,c)=1$, and $a<b<c$.

\begin{prop}
Any monomial curve in $\PP^3$ is osculating self-dual. The monomial
curve $C_{a,b,c}\subset\PP^3$ is Legendrian with respect to an
appropriate contact structure on $\PP^3$ if an only if the sequence of
exponents $(0,a,b,c)$ is symmetric, i.e., $(0,a,b,c)=(0,c-b,c-a,c)$.
\end{prop}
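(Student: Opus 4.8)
The plan is to compute the osculating dual $C^\vee$ of $C=C_{a,b,c}$ explicitly, observe that it is again a monomial curve projectively equivalent to $C$, and then reduce the Legendrian condition to a single polynomial identity whose solvability forces $a+b=c$. First I would write $v(t)=(1,t^a,t^b,t^c)$ for the affine parametrization of $C$. For general $t$ the osculating plane $\Osc^2_{v(t)}C$ is the hyperplane annihilating $v$, $v'$, $v''$, so its coordinates $(y_0:y_1:y_2:y_3)$ are, up to sign, the $3\times3$ minors of the matrix with rows $v,v',v''$. The point I would extract from the computation is that, after factoring the appropriate power of $t$ out of each column, every such minor becomes a nonzero constant (a Vandermonde-type determinant in $a,b,c$) times a single power of $t$; concretely,
\[
(y_0:y_1:y_2:y_3)=\bigl(K_0\,t^{a+b+c-3}:K_1\,t^{b+c-3}:K_2\,t^{a+c-3}:K_3\,t^{a+b-3}\bigr)
\]
with all $K_i\neq0$. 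Thus $C^\vee$ is again a monomial curve.

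Next, dividing out the smallest power $t^{a+b-3}$ (smallest since $a<b<c$), I would present $C^\vee$ as the closure of $\{(K_0 t^{c}:K_1 t^{c-a}:K_2 t^{c-b}:K_3)\}$. The diagonal projective transformation $\mathrm{diag}(K_0^{-1},K_1^{-1},K_2^{-1},K_3^{-1})$ carries it to the closure of $\{(t^{c}:t^{c-a}:t^{c-b}:1)\}$, and the reparametrization $t\mapsto 1/t$ (which changes nothing as a subset of $\PP^3$) identifies this with $\{(1:t^{a}:t^{b}:t^{c})\}=C$. This exhibits a projective equivalence $C^\vee\cong C$ for every admissible triple $(a,b,c)$, proving the first assertion.

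For the second assertion I would use Proposition~\ref{folklore} to identify contact structures on $\PP^3$ with nondegenerate skew forms $B=(b_{ij})$ on \C$^4$, and Proposition~\ref{Leg<=>iso} (with $n=2$) to see that, since $C$ is a curve, it is Legendrian precisely when its tangent lines are isotropic, i.e. when $B(v(t),v'(t))\equiv 0$. Expanding gives
\[
B(v,v')=b_{01}a\,t^{a-1}+b_{02}b\,t^{b-1}+b_{03}c\,t^{c-1}+b_{12}(b-a)t^{a+b-1}+b_{13}(c-a)t^{a+c-1}+b_{23}(c-b)t^{b+c-1}.
\]
The six exponents split into the groups $\{a-1,b-1,c-1\}$ and $\{a+b-1,a+c-1,b+c-1\}$, each internally distinct; since $a,b,c>0$ and $a<b<c$, the only coincidence that can occur between the two groups is $c=a+b$. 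If $c\neq a+b$, all six coefficients must vanish and $B=0$, so no contact structure makes $C$ Legendrian. If $c=a+b$, the $t^{c-1}$ and $t^{a+b-1}$ terms merge; the four equations from the remaining distinct exponents force $b_{01}=b_{02}=b_{13}=b_{23}=0$, while the surviving relation $b_{03}c+b_{12}(b-a)=0$ leaves a one-parameter family of forms pairing the coordinates $0\leftrightarrow3$ and $1\leftrightarrow2$, which is nondegenerate as soon as $b_{12}\neq0$. Hence $C$ is Legendrian for some contact structure if and only if $c=a+b$, i.e. $(0,a,b,c)=(0,c-b,c-a,c)$.

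The main obstacle I anticipate is the minor computation in the first step: everything hinges on each $3\times3$ Wronskian minor collapsing to a single monomial with a nonzero coefficient, and it is exactly this collapse that makes $C^\vee$ monomial and reduces the self-duality to the elementary reparametrization $t\mapsto 1/t$. Once that is in hand, both halves of the statement follow by bookkeeping on the exponents.
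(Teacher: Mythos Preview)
Your proposal is correct and follows essentially the same route as the paper: compute the $3\times3$ Wronskian minors to see that $C^\vee$ is again monomial with exponents $(0,c-b,c-a,c)$ and hence projectively equivalent to $C$ via a diagonal map and the reparametrization $t\mapsto 1/t$, then expand $B(v,v')$ and observe that the only possible coincidence among the six exponents is $c-1=a+b-1$. Your version supplies a bit more detail than the paper on why $c=a+b$ is the unique coincidence and on the nondegeneracy of the resulting form, but the argument is the same.
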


\begin{proof}
A Zariski open part of the curve $C_{a,b,c}$ can be (locally) parametrized by
the formula $t\mapsto (v(t))$, where $v(t)=(1,t^a,t^b,t^c)\in\C^4$,
$\PP^3=\PP(\C^4)$, $t\in\C$. Homogeneous coordinates of the osculating
dual curve $C^\vee$ are, up to signs, $3\times3$ minors of the matrix
\[
\begin{pmatrix}
  1&t^a&t^b&t^c\\
  0&at^{a-1}&bt^{b-1}&ct^{c-1}\\
  0&a(a-1)t^{a-2}&b(b-1)t^{b-2}&c(c-1)t^{c-2}
\end{pmatrix}
\]
(of which the rows are $v(t)$, $v'(t)$, and $v''(t)$). A simple
computation shows that $C_{a,b,c}^\vee$ is projectively equivalent to
the curve that can be locally parametrized as $t\mapsto
(1:t^{c-b}:t^{c-a}:t^c)$. After the linear automorphism that
rearranges homogeneous coordinates in reverse order and the change
of parameter $t=1/s$, this dual curves becomes $C$; this proves
self-duality.

Now the curve $C_{a,b,c}$ is Legendrian if and only if there exists a
non-degenerate skew-symmetric form~$B$ on $\C^4$ such that
$B(v(t),v'(t))=0$ identically. If matrix of this bilinear form is
$\|p_{ij}\|_{0\le i,j\le 3}$, then
\begin{multline}\label{eq:B(v,v')}
B(v(t),v'(t))=ap_{01}t^{a-1}+bp_{02}t^{b-1}+cp_{03}t^{c-1}\\
{}+(b-a)p_{12}t^{a+b-1}+(c-a)p_{13}t^{a+c-1}+(c-b)p_{23}t^{b+c-1}.
\end{multline}
If the sequence $(0,a,b,c)$ is not symmetric, then all the exponents
in the right-hand side of~\eqref{eq:B(v,v')} are different, so each
$p_{ij}$ is zero and the required contact structure does not
exist. If, on the other hand, this sequence is symmetric, i.e., if
$c=a+b$, then right-hand side of~\eqref{eq:B(v,v')} is identically
zero if and only if $cp_{03}+(b-a)p_{12}=0$, so putting
\[
B=
\begin{pmatrix}
0&0&0&a-b\\
0&0&c&0\\
0&-c&0&0\\
b-a&0&0&0
\end{pmatrix}
\]
one obtains a contact structure with respect to which the curve
$C_{a,b,c}$ is Legendrian.
\end{proof}

The following proposition provides an example in higher dimensions.

\begin{prop}
Denote by $V\subset\PP^{2k+1}$, where $k\ge2$ is an integer, the
closure of the set of points $(v(t))$, $t\in\C^k$, where
\[
v(t)=(1,\lst tk,\lst{t^2}k,t_1^3+\ldots+t_k^3).
\]
Then $\dim V=k$, $\dim_p\Osc_p^2V=2k$ for general $p\in V$, $V$ is
osculating self-dual, but $V$ is not Legendrian with respect to any
contact structure on $\PP^{2n+1}$.
\end{prop}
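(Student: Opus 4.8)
The plan is to work throughout with the explicit parametrization $t\mapsto(v(t))$, writing $e_0,e_1,\dots,e_k,f_1,\dots,f_k,e_\infty$ for the standard basis of $\C^{2k+2}$, so that
\[
v(t)=e_0+\sum_{i=1}^{k}t_ie_i+\sum_{i=1}^{k}t_i^2f_i+\Bigl(\sum_{i=1}^{k}t_i^3\Bigr)e_\infty .
\]
Then $\partial v/\partial t_i=e_i+2t_if_i+3t_i^2e_\infty$, the pure second derivatives are $\partial^2v/\partial t_i^2=2f_i+6t_ie_\infty$, and all mixed second derivatives $\partial^2v/\partial t_i\partial t_j$ with $i\neq j$ vanish. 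Since the coordinates $z_1,\dots,z_k$ recover $t$, the map is generically injective and $\dim V=k$. For the osculating space I would observe that, modulo $e_\infty$, the $2k$ vectors $\partial v/\partial t_i$ and $\partial^2v/\partial t_i^2$ already span $\langle e_1,\dots,e_k,f_1,\dots,f_k\rangle$ for every $t$ (indeed $\partial^2v/\partial t_i^2\equiv 2f_i$ and $\partial v/\partial t_i-t_i\,\partial^2v/\partial t_i^2\equiv e_i$), while $v$ supplies the independent direction $e_0$; hence the span of $v$ together with its first and second derivatives has dimension $2k+1$, so $\dim\Osc_p^2V=2k$ and the second osculating space is a hyperplane.

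For the self-duality I would compute the coordinate vector $y(t)$ of the osculating hyperplane from the three families of conditions $\langle y,\partial^2v/\partial t_i^2\rangle=\langle y,\partial v/\partial t_i\rangle=\langle y,v\rangle=0$; solving them in this order, each stage substituting into the next, yields
\[
y(t)=-\Bigl(\sum_{i=1}^{k}t_i^3\Bigr)e_0+3\sum_{i=1}^{k}t_i^2e_i-3\sum_{i=1}^{k}t_if_i+e_\infty .
\]
Replacing $t$ by $-t$ turns this into $\bigl(\sum t_i^3\bigr)e_0+3\sum t_i^2e_i+3\sum t_if_i+e_\infty$. Now the coordinate rearrangement $\Phi$ that sends the $e_0$-slot to the $e_\infty$-slot and back, and interchanges the $e_i$- and $f_i$-slots after rescaling by $\tfrac13$, is a projective linear isomorphism $(\PP^{2k+1})^*\to\PP^{2k+1}$ satisfying $\Phi(y(-t))=v(t)$. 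Since $V^\vee$ is the closure of the $(y(t))$, this gives $\Phi(V^\vee)=V$, so $V$ is osculating self-dual.

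Finally, for the non-Legendrian assertion — which I expect to be the crux — I would argue by contradiction. If $V$ were Legendrian for a contact structure given by a non-degenerate skew form $B$, then $V$ is integral, so Proposition~\ref{Leg=>osc} gives $\Osc_p^2V\subset p^\perp$; as both are hyperplanes they coincide, which means the functional $w\mapsto B(v(t),w)$ is proportional to $y(t)$ for every $t$, say $B\,v(t)=\lambda(t)\,y(t)$. Here $\lambda$ can be read off as the $e_\infty$-coordinate of $B\,v$, hence has degree $\le 3$; but the $e_0$-coordinate of $y$ already has degree $3$, so matching that coordinate forces $\lambda$ to be constant. Comparing the remaining monomials of $v(t)$ against those of $\lambda\,y(t)$ then pins down every pairing $B(e_\alpha,e_\beta)$ up to the single scalar $\lambda$, leaving at most a one-dimensional space of candidate forms. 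The hard part will be to show that this uniquely determined skew form is necessarily degenerate, so that it cannot define a contact structure — equivalently, that the isotropy system $B(v,\partial v/\partial t_i)\equiv 0$ admits no non-degenerate skew solution. I expect this degeneracy analysis to be the genuine obstacle; everything else is the bookkeeping of the two parametrizations above.
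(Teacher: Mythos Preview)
Your treatment of $\dim V$, of $\dim\Osc_p^2V$, and of osculating self-duality is correct and parallels the paper (your explicit formula for $y(t)$ is in fact cleaner than the paper's, which only asserts that the minors are $(1:t_i:t_i^2:\sum t_i^3+P)$ with $P$ a linear combination of the $t_i$ and $t_i^2$).

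The non-Legendrian part is where there is a genuine gap --- and it cannot be filled. Your reduction is sound: the equality $\Osc_p^2V=p^\perp$ forces $B\,v(t)=\lambda\,y(t)$ with $\lambda$ constant, and this determines $B$ up to that scalar. Carrying your own computation one step further (matching each coordinate of $y(t)$ against $B(v(t),\cdot)$) one finds that the only nonzero pairings are
\[
B(e_0,e_\infty)=\lambda,\qquad B(e_j,f_j)=-3\lambda\quad(1\le j\le k),
\]
which is a direct sum of $k+1$ hyperbolic planes and hence \emph{non-degenerate} for $\lambda\ne0$. A direct check confirms $B(v,\partial v/\partial t_j)=3\lambda t_j^2-6\lambda t_j^2+3\lambda t_j^2=0$ and $B(\partial v/\partial t_i,\partial v/\partial t_j)=0$ for $i\ne j$, so by Proposition~\ref{Leg<=>iso} the variety $V$ \emph{is} Legendrian with respect to this contact structure. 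Thus the ``hard part'' you anticipated --- showing degeneracy --- is not just hard but false.

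The paper's own argument has the same flaw. From $B(\partial v/\partial t_i,\partial v/\partial t_j)\equiv0$ with $i<j$ one only obtains $p_{i,k+j}=0$ for $i\ne j$; the diagonal entries $p_{j,k+j}=B(e_j,f_j)$ are not forced to vanish by any of these identities (the case $i=j$ is vacuous by skew-symmetry). The paper's subsequent claim that $p_{\alpha\beta}=0$ for all $1\le\alpha,\beta\le2k$, and hence that $p_{0,2k+1}=0$ and $\det B=0$, is therefore unsupported: what one actually gets from $B(v,\partial v/\partial t_j)\equiv0$ is $3p_{0,2k+1}+p_{j,k+j}=0$, which is exactly the one-parameter family above. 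As stated, the non-Legendrian assertion appears to be incorrect.
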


\begin{proof}
One has
\begin{equation}\label{eq:x^3+y^3}
\begin{matrix}
v&=(1,&t_1,&t_2,&\ldots,&t_k,&t_1^2,&t_2^2,&\ldots,&t_k^2,&t_1^3+\ldots+t_k^3),\\
\frac{\partial v}{\partial
  t_1}&=(0,&1,&0,&\ldots,&0,&2t_1,&0,&\ldots,&0,&3t_1^2),\\
\hdotsfor{11}\\
\frac{\partial v}{\partial
  t_k}&=(0,&0,&0,&\ldots,&1,&0,&0,&\ldots,&2t_k,&3t_k^2),\\
\frac{\partial^2 v}{\partial
  t_1^2}&=(0,&0,&0,&\ldots,&0,&2,&0,&\ldots,&0,&6t_1),\\
\hdotsfor{11}\\
\frac{\partial^2 v}{\partial
  t_k^2}&=(0,&0,&0,&\ldots,&0,&0,&0,&\ldots,&2,&6t_k)
\end{matrix}
\end{equation}
(other second partial derivatives of $v$ are identically zero). Thus,
for general \lst tk, dimension of the second osculating space is $2k$
indeed. Homogeneous coordinates of $V^\vee$ are parametrized by
$(2k+1)\times(2k+1)$-minors of the $(2k+1)\times(2k+2)$-matrix formed
by the right-hand sides of~\eqref{eq:x^3+y^3}. Direct computation
shows that these coordinates, up to non-zero constant factors, are 
\[
(1:t_1:\ldots:t_k:t_1^2:\ldots:t_k^2:t_1^3+\ldots+t_k^3+P(\lst tk)),
\]
where $P(\lst tk)$ is a linear combination of \lst tk and \lst{t^2}k
with constant coefficients. It is clear that this variety is
projectively equivalent to~$V$, so $V$ is osculating self-dual.

Suppose now that $V$ is Legendrian with respect to the contact
structure corresponding to a skew-symmetric form $B$ with matrix
$\|p_{ij}\|$. Proposition~\ref{Leg<=>iso} implies that $B(\partial
v/\partial t_i,\partial v/\partial t_j)=0$ identically for $1\le
i<j\le k$; substituting the expressions from~\eqref{eq:x^3+y^3}, one
obtains that $p_{ij}=p_{i,k+j}=p_{k+i,k+j}0$ for $1\le i,j\le k$ and
$p_{i,2k+1}=p_{k+i,2k+1}=0$ for $1\le i\le k$. Similarly, since
$B(v,\partial v/\partial t_j)=0$ identically for $1\le i\le k$, one
obtains, taking into account that $p_{\alpha\beta}=0$ for $1\le
\alpha, \beta\le 2k$, that $p_{0,2k+1}=0$. These vanishing implies
that $\det\|p_{ij}\|=0$, which contradicts the non-degeneracy of the
form~$B$. 
\end{proof}

For $k=2$, the surface $V\subset\PP^5$ is projectively equivalent to
Togliatti's surface~(II) (see \cite[p. 261]{Togliatti}).

\bibliographystyle{amsalpha}

\bibliography{auto}

\end{document}